\newcommand{\eps}{\varepsilon}
\newcommand{\vr}{\varepsilon}
\newcommand{\s}{\qquad}
\newcommand{\be}{\begin{equation}}
\newcommand{\ba}{\begin{array}}
\newcommand{\ee}{\end{equation}}
\newcommand{\ea}{\end{array}}
\newcommand{\oln}{\overline}
\newcommand{\tx}{\text}
\newcommand{\ds}{\displaystyle}
\def\eop{{\ \vrule height 3pt width 3pt depth 0pt}}
\begin{document}\author{
{\sc M. Paramasivam}$^1$\;\;\;\;{\;\;\sc
S.~Valarmathi}$^2$\;\;\;\;{and}{\;\;\;\;\sc J.J.H.~Miller}$^3$}
\title*{{\bf Second order parameter-uniform convergence for a finite difference method for a singularly
perturbed linear reaction-diffusion system}}
\institute{$^1${Department of Mathematics, Bishop Heber College, Tiruchirappalli-620 017, Tamil Nadu, India. sivambhcedu@gmail.com.}\\
$^2${Department of Mathematics, Bishop Heber College, Tiruchirappalli-620 017, Tamil Nadu, India. valarmathi07@gmail.com.}\\
$^3${Institute for Numerical Computation and Analysis, Dublin,
Ireland. jm@incaireland.org.}}
\titlerunning{Numerical solution of a reaction-diffusion system}
\maketitle
\begin{abstract}
A singularly perturbed linear system of second order ordinary
differential equations of reaction-diffusion type with given
boundary conditions is considered. The leading term of each equation
is multiplied by a small positive parameter. These singular
perturbation parameters are assumed to be distinct. The components
of the solution exhibit overlapping layers. Shishkin
piecewise-uniform meshes are introduced, which are used in
conjunction with a classical finite difference discretisation, to
construct a numerical method for solving this problem. It is proved
that the numerical approximations obtained with this method is
essentially second order convergent uniformly with respect to all of
the parameters.
\end{abstract}

\section{Introduction}

The following two-point boundary value problem is considered for the
singularly perturbed linear system of second order differential
equations
\begin{equation}\label{BVP}
-E\vec{u}''(x)+A(x)\vec{u}(x)=\vec{f}(x),\;\;\;
x\in(0,1),\;\;\;\vec{u}(0)\;\tx{and}\;\vec{u}(1)\;\tx{given.}
\end{equation}
Here $\;\vec{u}\;$ is a column $\;n-\tx{vector},\;E\;$ and
$\;A(x)\;$ are $\;n\times n\;$ matrices, $\;E =
\tx{diag}(\vec{\eps}),\;\vec\eps = (\eps_1,\;\cdots,\;\eps_n)\;$
with $\;0\;<\;\eps_i\;\le\;1\;$ for all $\;i=1,\ldots,n$. The
$\eps_i$ are assumed to be distinct and,
for convenience, to have the ordering \[\eps_1\;<\;\cdots\;<\;\eps_n.\] Cases with some of the parameters coincident are not considered here.\\
\noindent The problem can also be written in the operator form
\[\vec L\vec u\; = \;\vec{f},\;\;\;\vec{u}(0)\;\tx{and}\;\vec{u}(1)\;\tx{given}\]
where the operator $\;\vec{L}\;$ is defined by
\[\vec{L}\;=\;-E D^2 + A(x)\;\;\;\tx{and}\;\;\;D^2 = \dfrac{d^2}{dx^2}.\]
For all $x \in [0,1]$ it is assumed that the components $a_{ij}(x)$
of $A(x)$
satisfy the inequalities \\
\begin{eqnarray}\label{a1} a_{ii}(x) > \displaystyle{\sum_{^{j\neq
i}_{j=1}}^{n}}|a_{ij}(x)| \; \; \rm{for}\;\; 1 \le i \le n, \;\;
\rm{and} \;\;a_{ij}(x) \le 0 \;\; \rm{for} \; \; i \neq
j\end{eqnarray} and, for some $\alpha$,
\begin{eqnarray}\label{a2} 0 <\alpha <
\displaystyle{\min_{^{x \in [0,1]}_{1 \leq i \leq n}}}(\sum_{j=1}^n
a_{ij}(x)).
\end{eqnarray} Wherever necessary the required smoothness of the problem data is assumed.
It is also assumed, without loss of generality, that
\begin{eqnarray}\label{a3}
\max_{1 \leq i \leq n} \sqrt{\eps_i} \leq \frac{\sqrt{\alpha}}{6}.
\end{eqnarray}
The norms $\parallel \vec{V} \parallel =\max_{1 \leq k \leq n}|V_k|$
for any n-vector $\vec{V}$, $\parallel y
\parallel =\sup_{0\leq x\leq 1}|y(x)|$ for any
scalar-valued function $y$ and $\parallel \vec{y}
\parallel=\max_{1 \leq k \leq n}\parallel y_{k}
\parallel$ for any vector-valued function $\vec{y}$ are introduced.
Throughout the paper $C$ denotes a generic positive constant, which
is independent of $x$ and of all singular perturbation and
discretization parameters. Furthermore, inequalities between vectors
are understood in the componentwise sense.\\

\noindent For a general introduction to parameter-uniform numerical
methods for singular perturbation problems, see \cite{MORS},
\cite{RST} and  \cite{FHMORS}. Parameter-uniform numerical methods
for various special cases of (\ref{BVP}) are examined in, for
example, \cite{MMORS}, \cite{MS} and \cite{HV}. For (\ref{BVP})
itself parameter-uniform numerical methods of first and second order
are considered in \cite{LM}. However, the present paper differs from
\cite{LM} in two important ways. First of all, the meshes, and hence
the numerical methods, used are different from those in \cite{LM};
the transition points between meshes of differing resolution are
defined in a similar but different manner. The piecewise-uniform
Shishkin meshes $M_{\vec{b}}$ in the present paper have the elegant
property that they reduce to uniform meshes whenever
$\vec{b}=\vec{0}$. Secondly, the proofs given here do not require
the use of Green's function techniques, as is the case in \cite{LM}.
The significance of this is that it is more likely that such
techniques can be extended in future to problems in higher
dimensions and to nonlinear problems, than is the case for proofs
depending on Green's functions. It is also satisfying to demonstrate
that the methods of proof pioneered by G. I. Shishkin can be
extended successfully to systems of this kind.

The plan of the paper is as follows. In the next section both standard and novel bounds on the smooth and singular components of the exact solution are obtained.
The sharp estimates for the singular component in Lemma \ref{lsingular}
 are proved by mathematical induction, while interesting orderings of the points $x_{i,j}$ are
established in Lemma \ref{layers}. In Section 4
 piecewise-uniform Shishkin meshes are introduced, the
discrete problem is defined and the discrete maximum principle and
discrete stability properties are established. In Section 6
 an expression for the local
truncation error and a standard estimate are stated.
In Section 7 parameter-uniform estimates for the local truncation error of the smooth and singular components
are obtained in a sequence of theorems. The section
culminates with the statement and proof of the essentially second order parameter-uniform error
estimate.\\

\section{Standard analytical results}
The operator $\vec L$ satisfies the following maximum principle
\begin{lemma}\label{max} Let $A(x)$ satisfy (\ref{a1}) and (\ref{a2}). Let
$\;\vec{\psi}\;$ be any function in the domain of $\;\vec L\;$ such
that $\;\vec{\psi}(0)\ge\vec{0}\;$ and
$\;\vec{\psi}(1)\ge\vec{0}.\;$ Then $\;\vec
L\vec{\psi}(x)\ge\vec{0}\;$ for all $\;x\;\in\;(0,1)\;$ implies that
$\;\vec{\psi}(x)\ge\vec{0}\;$ for all $\;x\;\in\;[0,1]$.
\end{lemma}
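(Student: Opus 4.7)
The natural approach is proof by contradiction, working with the vector component that attains the most negative value. I plan the following.

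First I assume, toward a contradiction, that $\vec{\psi}(x) \not\ge \vec{0}$ somewhere on $[0,1]$. Then I define
\[
(i^{*},x^{*}) \;=\; \operatorname*{arg\,min}_{\substack{1\le i\le n\\[1pt] x\in[0,1]}} \psi_i(x),
\]
so that $\psi_{i^{*}}(x^{*})<0$ and $\psi_{i^{*}}(x^{*})\le \psi_j(x)$ for every $j$ and every $x\in[0,1]$. The boundary hypotheses $\vec{\psi}(0)\ge\vec{0}$ and $\vec{\psi}(1)\ge\vec{0}$ force $x^{*}\in(0,1)$, and since $\psi_{i^{*}}$ attains an interior minimum at $x^{*}$ the second-derivative test gives $\psi_{i^{*}}^{\prime\prime}(x^{*})\ge 0$, hence $-\eps_{i^{*}}\psi_{i^{*}}^{\prime\prime}(x^{*})\le 0$.

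Next I evaluate the $i^{*}$-th component of $\vec L\vec{\psi}$ at $x^{*}$:
\[
(\vec L\vec{\psi})_{i^{*}}(x^{*}) \;=\; -\eps_{i^{*}}\psi_{i^{*}}^{\prime\prime}(x^{*}) + \sum_{j=1}^{n} a_{i^{*}j}(x^{*})\,\psi_j(x^{*}).
\]
For each off-diagonal index $j\neq i^{*}$, assumption (\ref{a1}) gives $a_{i^{*}j}(x^{*})\le 0$, and the minimality of $\psi_{i^{*}}(x^{*})$ gives $\psi_j(x^{*})\ge \psi_{i^{*}}(x^{*})$; multiplying a non-positive number by a larger quantity reverses the inequality, so
\[
a_{i^{*}j}(x^{*})\,\psi_j(x^{*}) \;\le\; a_{i^{*}j}(x^{*})\,\psi_{i^{*}}(x^{*}).
\]
Summing this bound with the diagonal term, I obtain
\[
\sum_{j=1}^{n} a_{i^{*}j}(x^{*})\,\psi_j(x^{*}) \;\le\; \psi_{i^{*}}(x^{*})\sum_{j=1}^{n} a_{i^{*}j}(x^{*}).
\]
By hypothesis (\ref{a2}) the row sum is strictly larger than $\alpha>0$, and by assumption $\psi_{i^{*}}(x^{*})<0$, so the right-hand side is strictly negative. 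Combining with the non-positivity of the second-derivative term yields $(\vec L\vec{\psi})_{i^{*}}(x^{*})<0$, contradicting the hypothesis $\vec L\vec{\psi}\ge\vec{0}$ on $(0,1)$.

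The only delicate point is the algebraic manipulation of the off-diagonal terms: one has to exploit simultaneously the sign of $a_{i^{*}j}$, the minimality of $\psi_{i^{*}}(x^{*})$, and the strict diagonal dominance expressed through the row sum, in order to collapse $n$ inequalities into a single strict one. Once that step is in place the argument is entirely local at $(i^{*},x^{*})$ and closes immediately.
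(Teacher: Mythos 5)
Your proof is correct and follows essentially the same argument as the paper: both take the componentwise minimum at $(i^*,x^*)$, use the boundary hypotheses and the second-derivative test, and derive the contradiction $(\vec L\vec\psi)_{i^*}(x^*)<0$. The only difference is that you spell out explicitly (via the sign of the off-diagonal entries, the minimality of $\psi_{i^*}(x^*)$, and condition (\ref{a2})) why the sum $\sum_j a_{i^*j}(x^*)\psi_j(x^*)$ is strictly negative, a step the paper leaves implicit.
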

\begin{proof}Let $i^*, x^*$ be such that $\psi_{i^*}(x^{*})=\min_{i,x}\psi_i(x)$
and assume that the lemma is false. Then $\psi_{i^*}(x^{*})<0$ .
From the hypotheses we have $x^* \not\in\;\{0,1\}$ and $\psi^{\prime
\prime}_{i^*}(x^*)\geq 0$. Thus
\begin{equation*}(\vec{L}\vec \psi(x^*))_{i^*}=
-\eps_{i^*}\psi^{\prime\prime}_{i^*}(x^*)+\sum_{j=1}^n
a_{i^*,j}(x^{*})\psi_j(x^*)<0,
\end{equation*} which contradicts the assumption and proves the
result for $\vec{L}$.\eop \end{proof}

Let $\tilde{A}(x)$ be any principal sub-matrix of $A(x)$ and
$\vec{\tilde{L}}$ the corresponding operator. To see that any
$\vec{\tilde{L}}$ satisfies the same maximum principle as $\vec{L}$,
it suffices to observe that the elements of $\tilde{A}(x)$ satisfy
\emph{a fortiori} the same inequalities as those of $A(x)$.
\begin{lemma}\label{stab} Let $A(x)$ satisfy (\ref{a1}) and (\ref{a2}). If $\vec{\psi}$ is any function in the domain of $\;\vec L,\;$
 then for each $i, \; 1 \leq i \leq n $, \[|\vec{\psi}_i(x)| \le\;
\max\ds\left\{\parallel\vec \psi(0)\parallel,\parallel\vec
\psi(1)\parallel, \dfrac{1}{\alpha}\parallel \vec L\vec
\psi\parallel\right\},\s x\in [0,1].\]
\end{lemma}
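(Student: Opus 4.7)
The plan is the classical barrier-function argument attached to the maximum principle already established in Lemma \ref{max}. Let
\[
M \;=\; \max\left\{\|\vec{\psi}(0)\|,\;\|\vec{\psi}(1)\|,\;\frac{1}{\alpha}\|\vec{L}\vec{\psi}\|\right\},
\]
and let $\vec{e}=(1,\ldots,1)^T$. I would introduce the two comparison functions
\[
\vec{\phi}^{\pm}(x) \;=\; M\,\vec{e} \;\pm\; \vec{\psi}(x),
\]
and show that each $\vec{\phi}^{\pm}$ satisfies the hypotheses of Lemma \ref{max}.

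First, at the endpoints: for every $i$, $\phi_i^{\pm}(0)=M\pm\psi_i(0)\ge M-|\psi_i(0)|\ge M-\|\vec{\psi}(0)\|\ge 0$, and similarly $\phi_i^{\pm}(1)\ge 0$. Second, since $\vec{e}$ is constant, $-E(\vec{e})''=\vec{0}$, so
\[
(\vec{L}\vec{\phi}^{\pm}(x))_i \;=\; M\sum_{j=1}^{n}a_{ij}(x)\;\pm\;(\vec{L}\vec{\psi}(x))_i.
\]
Using assumption (\ref{a2}), $\sum_{j=1}^{n}a_{ij}(x)\ge\alpha$ pointwise, so $M\sum_{j}a_{ij}(x)\ge\alpha M\ge\|\vec{L}\vec{\psi}\|\ge|(\vec{L}\vec{\psi}(x))_i|$, and hence $(\vec{L}\vec{\phi}^{\pm}(x))_i\ge 0$ for all $x\in(0,1)$ and all $i$.

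Lemma \ref{max} then yields $\vec{\phi}^{\pm}(x)\ge\vec{0}$ on $[0,1]$, which is precisely $|\psi_i(x)|\le M$, the desired conclusion. I do not expect any real obstacle: the only points requiring a small check are that the constant vector is annihilated by $-ED^2$ and that the row-sum lower bound from (\ref{a2}) dominates the normalisation factor $1/\alpha$ in the definition of $M$; both are immediate. The argument relies on (\ref{a2}) (the strict positivity $\alpha>0$); the off-diagonal sign condition in (\ref{a1}) enters only implicitly through its role in Lemma \ref{max}.
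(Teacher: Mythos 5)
Your proposal is correct and is essentially identical to the paper's own proof: the paper defines the same barrier functions $M\vec{e}\pm\vec{\psi}$, checks the boundary and operator inequalities (which you spell out in detail), and invokes Lemma \ref{max}. No issues.
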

\begin{proof}Define the two functions \[\vec \theta^\pm(x)\;=\;
\max\ds\left\{\parallel\vec \psi(0)\parallel,\;\parallel\vec
\psi(1)\parallel,\;\dfrac{1}{\alpha}\parallel\vec{L}\vec
\psi\parallel\right\}\vec e\;\pm\;\vec \psi(x)\] where $\;\vec
e\;=\;(1,\;\ldots,\;1)^T\;$ is the unit column vector. Using the
properties of $\;A\;$ it is not hard to verify that $\;\vec
\theta^\pm(0)\;\ge\;\vec 0,\;\;\vec \theta^\pm(1)\;\ge\;\vec 0\;$
and $\;\vec L\vec \theta^\pm(x)\;\ge\;\vec 0.\;$ It follows from
Lemma \ref{max} that $\;\vec \theta^\pm(x)\;\ge\;\vec 0\;$ for all
$\;x\;\in\;[0,\;1].\;$\eop
\end{proof}
A standard estimate of the exact solution and its derivatives is
contained in the following lemma.
\begin{lemma}\label{lexact} Let $A(x)$ satisfy (\ref{a1}) and (\ref{a2})
and let $\vec u$ be the exact solution of (\ref{BVP}). Then, for
each $i=1\; \dots \; n$, all $x\in [0,1]$ \; and \; $k=0,1,2$,
\[|u_i^{(k)}(x)| \leq
C\eps_i^{-\frac{k}{2}}(||\vec{u}(0)||+||\vec{u}(1)||+||\vec{f}||)\]
\[|u_i^{(3)}(x)| \leq
C\eps_i^{-\frac{3}{2}}(||\vec{u}(0)||+||\vec{u}(1)||+||\vec{f}||+\sqrt{\eps_i}||\vec{f}^\prime||)\]
and
\[|u_i^{(4)}(x)|\leq
C\eps_i^{-2}(||\vec{u}(0)||+||\vec{u}(1)||+||\vec{f}||+\eps_i||\vec{f}^{\prime\prime}||).
\]
\end{lemma}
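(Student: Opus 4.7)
The plan is to establish the five bounds in the (non-obvious) order $k=0,\,2,\,1,\,3,\,4$, using Lemma \ref{stab} to start, the scalar form of the equation to jump up one derivative level at a time, and a standard mean-value-theorem argument on intervals of length $\sqrt{\eps_i}$ (which by (\ref{a3}) comfortably fit inside $[0,1]$) to obtain the intermediate odd-derivative bounds.

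For $k=0$, the estimate is immediate from Lemma \ref{stab} applied to $\vec u$. For $k=2$, I would read $u_i''(x)$ pointwise off the $i$-th scalar equation $\eps_i u_i''(x) = \sum_j a_{ij}(x)u_j(x) - f_i(x)$, and bound its right-hand side using the $k=0$ estimate and the uniform boundedness of $A$. For $k=1$, I would apply the usual MVT trick: given $x\in[0,1]$, choose $y$ with $|x-y|\le\sqrt{\eps_i}$ satisfying $|u_i'(y)|\le 2\|u_i\|/\sqrt{\eps_i}$, and then combine $u_i'(x)=u_i'(y)+\int_y^x u_i''(t)\,dt$ with the $k=2$ bound on the integrand; the two pieces combine to give $C\eps_i^{-1/2}(\ldots)$. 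For $k=3$ the same MVT strategy one level higher yields a point $y$ with $|u_i^{(3)}(y)|\le 2\|u_i''\|/\sqrt{\eps_i}\le C\eps_i^{-3/2}(\ldots)$; the remaining difference $u_i^{(3)}(x)-u_i^{(3)}(y)$ is controlled via the differentiated scalar equation $\eps_i u_i^{(3)}=-f_i'+(A\vec u)_i'$ integrated from $y$ to $x$, with the $f_i'$ piece supplying the characteristic $\sqrt{\eps_i}\|\vec f'\|$ correction. The case $k=4$ proceeds analogously, with the twice-differentiated equation and the $k\le 3$ bounds producing the $\eps_i\|\vec f''\|$ correction.

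The step I expect to be the main obstacle is $k=3$ (and then $k=4$), because the differentiated equation introduces the cross-coupling term $\sum_j a_{ij}(x)u_j'(x)$ in which $|u_j'|$ generically behaves like $\eps_j^{-1/2}$ with $\eps_j$ possibly much smaller than $\eps_i$; a naive bound by $\max_j\eps_j^{-1/2}$ would spoil the claimed $\eps_i^{-3/2}$ scaling. Sharpening this estimate so that it depends on $\eps_i$ alone is the real content of the lemma. I would attempt it by not bounding the cross terms pointwise but instead estimating $u_i^{(3)}(x)-u_i^{(3)}(y)$ directly over the short interval of length $\sqrt{\eps_i}$, using each $j$-th scalar equation $\eps_j u_j''=(A\vec u)_j-f_j$ to trade higher derivatives of $u_j$ (for $j\ne i$) for lower-order quantities controlled uniformly in all the $\eps$'s, and then combining with the MVT anchor $y$ to recover the desired $\eps_i^{-3/2}$ (and, by iteration, $\eps_i^{-2}$) scaling.
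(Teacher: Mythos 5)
For $k=0,1,2$ your plan is exactly the paper's argument: the $k=0$ bound from Lemma \ref{stab}, the $k=2$ bound read off the $i$-th equation $u_i''=\eps_i^{-1}\bigl((A\vec u)_i-f_i\bigr)$, and the $k=1$ bound via the mean-value anchor on an interval of length $\sqrt{\eps_i}$ combined with that same expression for $u_i''$; the reordering $0,2,1$ is immaterial. The divergence, and the genuine gap, is at $k=3,4$. You correctly identify that the differentiated equation produces the coupling term $\eps_i^{-1}\sum_j a_{ij}u_j'$, whose naive estimate is $C\eps_i^{-1}\eps_1^{-1/2}$ rather than $C\eps_i^{-3/2}$, but the repair you sketch is never carried out and does not work as described. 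The MVT anchor does give a $y$ with $|u_i^{(3)}(y)|\le 2\|u_i''\|/\sqrt{\eps_i}\le C\eps_i^{-3/2}$, but passing from $y$ to $x$ requires integrating $u_i^{(4)}=\eps_i^{-1}\bigl((A\vec u)_i''-f_i''\bigr)$ over a length-$\sqrt{\eps_i}$ interval: trading $u_j''$ back through the $j$-th equation only reintroduces quantities of size $\eps_j^{-1}$, giving a contribution of size $\eps_i^{-1/2}\eps_j^{-1}$, while integrating by parts instead leaves a boundary term $\eps_i^{-1}a_{ij}u_j'(x)$ of size $\eps_i^{-1}\eps_j^{-1/2}$; neither is $O(\eps_i^{-3/2})$ when $\eps_j\ll\eps_i$. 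Indeed no such sharpening is available: in a constant-coefficient $2\times 2$ system the fast mode $e^{-x\sqrt{a_{11}/\eps_1}}$ enters $u_2$ with amplitude of order $\eps_1/\eps_2$ (from its eigenvector), so generically $|u_2^{(3)}(0)|$ is of order $\eps_1^{-1/2}\eps_2^{-1}$, which exceeds $\eps_2^{-3/2}$ whenever $\eps_1\ll\eps_2$. So the obstacle you flag is real, but it reflects an over-strong statement of the $k=3,4$ cases rather than a step your device can rescue.

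For comparison, the paper's own proof simply writes $\vec u^{(3)}=E^{-1}(A\vec u'+A'\vec u-\vec f\,')$ and $\vec u^{(4)}=E^{-1}(A\vec u''+2A'\vec u'+A''\vec u-\vec f\,'')$ and asserts that the bounds follow, i.e.\ it performs precisely the crude componentwise substitution you decline to make; taken literally that substitution yields bounds involving $\eps_1$ in the coupling terms, and consistently the later sharp estimates in Lemma \ref{lsingular} bound $w_i^{l,(3)}$ and $\eps_i w_i^{l,(4)}$ by sums over all $q\ge 1$, not $q\ge i$. Since the only later use of Lemma \ref{lexact} is the first-derivative bound $|w_i^{l,\prime}|\le C\eps_i^{-1/2}$ in the proof of Lemma \ref{lsingular}, the sound course for your write-up is to prove $k\le 2$ as you do, and for $k=3,4$ either follow the paper's direct substitution while stating honestly the $\eps$-dependence it actually delivers (e.g.\ $\eps_i^{-1}\max_j\eps_j^{-1/2}$ and $\eps_i^{-1}\max_j\eps_j^{-1}$ in the coupling terms), or omit these cases; as it stands, your proposal does not prove the third- and fourth-derivative estimates.
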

\begin{proof}
The bound on $\vec{u}$ is an immediate consequence of Lemma
\ref{stab} and the differential equation.\\ To bound
$u_i^\prime(x)$, for all $i$ and any $x$, consider an interval
$N_x=[a,a+\sqrt{\eps_i}]$ such that $x \in N_x$. Then, by the mean
value theorem, for some $y\in N_x$,
\[u_i^\prime(y)=\frac{u_i(a+\sqrt{\eps_i})-u_i(a)}{\sqrt{\eps_i}}\]
and it follows that \[|u_i^\prime(y)|\leq
2\eps_i^{-\frac{1}{2}}||u_i||.
\]
Now
\[\vec{u}^\prime(x)=\vec{u}^\prime(y)+\int_y^x \vec{u}^{\prime\prime}(s)ds=
\vec{u}^\prime(y)+E^{-1}\int_y^x(-\vec{f}(s)+A(s)\vec{u}(s))ds\] and
so
\[|u_i^\prime(x)|\leq |u_i^\prime(y)|+C\eps_i^{-1}(||f_i||+||\vec u||)\int_y^x ds
\leq C\eps_i^{-\frac{1}{2}}(||f_i||+||\vec u||)\] from which the
required bound follows.\\
Rewriting and differentiating the differential equation gives $\vec u^{\prime\prime}= E^{-1}(A\vec u-\vec f), $\;  \; $\vec u^{(3)}= E^{-1}(A\vec
u^\prime+A^\prime\vec u-\vec f^\prime),$ $\vec u^{(4)}= E^{-1}(A\vec u^{\prime\prime}+2A^\prime\vec u^{\prime}+A^{\prime\prime}\vec{u}-\vec f^{\prime\prime}),$ and
the bounds on $u_i^{\prime\prime}$, $u_i^{(3)}$, $u_i^{(4)}$ follow.\eop\end{proof} The reduced solution $\vec{u}_0$ of (\ref{BVP}) is the solution of the reduced
equation $A\vec {u}_0=\vec f$. The Shishkin decomposition of the exact solution $\;\vec{u}\;$ of (\ref{BVP}) is $\;\vec{u}=\vec{v}+\vec{w}\;$ where the smooth
component $\;\vec v\;$ is the solution of $\;\vec L\vec v = \vec f\;$ with $\;\vec v(0) = \vec u_0(0)\;$ and $\;\vec v(1) = \vec u_0(1)\;$ and  the singular
component $\;\vec w\;$ is the solution of $\vec L\vec w\;=\;\vec 0$ with $\vec w(0)=\vec u(0)-\vec v(0)$ and $\vec w(1)=\vec u(1)-\vec v(1).$ For convenience the
left and right boundary layers of $\vec w$ are separated using the further decomposition $\vec w =\vec{w}^l+\vec{w}^r$ where $\vec{L}\vec{w}^l=\vec{0},\;
\vec{w}^l(0)=\vec u(0)-\vec v(0),\; \vec{w}^l(1)=\vec{0}$ and $\vec{L}\vec{w}^r= \vec{0},\;
\vec{w}^r(0)=\vec{0},\; \vec{w}^r(1)=\vec u(1)-\vec v(1).$\\
Bounds on the smooth component and its derivatives are contained in
\begin{lemma}\label{lsmooth} Let $A(x)$ satisfy (\ref{a1}) and (\ref{a2}).
Then the smooth component $\vec v$ and its derivatives satisfy, for all $x\in [0,1]$, \; $i=1,\; \dots \; n$ \;and \; $k=0, \;\dots \;4$,
\[|v_i^{(k)}(x)| \leq C(1+\eps_i^{1-\frac{k}{2}}).\]
\end{lemma}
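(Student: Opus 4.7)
The plan is to combine Lemma \ref{stab}, the derivative estimates of Lemma \ref{lexact}, and a Shishkin-type decomposition of $\vec v$ in powers of the matrix $E$. The $k=0$ bound is immediate from Lemma \ref{stab}, since $\vec v(0)$, $\vec v(1)$ and $\vec f$ are all bounded independently of $\vec\eps$.

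For the low-order derivative bounds the crucial observation is that $\vec v''$ vanishes at both endpoints. Indeed, evaluating $-E\vec v''+A\vec v=\vec f$ at $x=0$ and using $\vec v(0)=\vec u_0(0)=A^{-1}(0)\vec f(0)$ gives $E\vec v''(0)=A(0)\vec v(0)-\vec f(0)=\vec 0$, so $\vec v''(0)=\vec 0$; likewise $\vec v''(1)=\vec 0$. Twice-differentiating the equation yields $\vec L\vec v''=\vec f''-A''\vec v-2A'\vec v'$, so Lemma \ref{stab} applied to $\vec v''$ gives $\|\vec v''\|\le C(1+\|\vec v'\|)/\alpha$. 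Combined with the mean-value bound $|v_i'(x)|\le 2\|\vec v\|+\|\vec v''\|$ on the unit interval, and started from the crude estimates of Lemma \ref{lexact}, a short bootstrap closes to yield $\|\vec v'\|,\|\vec v''\|\le C$, which is the required bound for $k=1,2$.

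For $k=3,4$ I would introduce the decomposition $\vec v=\vec v_0+\vec v_1+\vec z$ with $\vec v_0=A^{-1}\vec f$, $\vec v_1=A^{-1}(E\vec v_0'')$, and remainder $\vec z$. A telescoping computation gives $\vec L\vec z=E\vec v_1''$ with boundary data $\vec z(0)=-\vec v_1(0)$, $\vec z(1)=-\vec v_1(1)$, each of norm $O(\eps_n)$. Since $\vec v_0$ and $\vec v_1$ are defined algebraically through $A^{-1}$ and the smooth data, they and all their derivatives are bounded (with an additional factor $\eps_n$ in the case of $\vec v_1$). Applying Lemma \ref{lexact} to $\vec z$ supplies the required bounds on $z_i^{(3)}$ and $z_i^{(4)}$, and assembling the three pieces produces the claimed estimate.

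The main obstacle is obtaining componentwise sharpness. The norm estimate $\|\vec z\|\le C\eps_n$ is tight at $i=n$ but off by the factor $\eps_n/\eps_i$ for $i<n$, so a direct application of Lemma \ref{lexact} gives only $|z_i^{(k)}|\le C\eps_n\eps_i^{-k/2}$ rather than the target $C\eps_i^{1-k/2}$. Closing this gap requires going beyond sup-norm estimates: one has to exploit the M-matrix structure of $A$ (diagonal dominance $a_{ii}>\sum_{j\ne i}|a_{ij}|$ together with the sign condition $a_{ij}\le 0$ for $i\ne j$) and the ordering $\eps_1<\cdots<\eps_n$, either through a componentwise barrier in the maximum principle with an index-dependent test vector, or through a further decomposition that resolves the slaving of components with smaller $\eps_i$ to those with larger ones. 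This componentwise refinement is the delicate step of the proof.
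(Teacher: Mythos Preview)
Your treatment of $k=0,1,2$ matches the paper's: the vanishing boundary values for $\vec v''$, the equation $\vec L\vec v''=\vec g$ with $\vec g=\vec f''-A''\vec v-2A'\vec v'$, Lemma~\ref{stab}, and an interpolation inequality together close to $\|\vec v'\|,\|\vec v''\|\le C$.

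For $k=3,4$ you take an unnecessary detour. The paper simply continues with the boundary value problem already set up for $\vec p:=\vec v''$, namely $\vec L\vec p=\vec g$ with $\vec p(0)=\vec p(1)=\vec 0$ and $\|\vec g\|\le C$ now established. Applying Lemma~\ref{lexact} directly to \emph{this} problem (with $\vec p$ in the role of $\vec u$ and $\vec g$ in the role of $\vec f$) gives $|p_i'(x)|\le C\eps_i^{-1/2}$ and $|p_i''(x)|\le C\eps_i^{-1}$, that is, $|v_i^{(3)}|\le C\eps_i^{-1/2}$ and $|v_i^{(4)}|\le C\eps_i^{-1}$, which is precisely the claimed bound $C(1+\eps_i^{1-k/2})$ for $k=3,4$. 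The componentwise sharpness comes for free, because Lemma~\ref{lexact} is already a componentwise estimate.

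Your asymptotic decomposition $\vec v=\vec v_0+\vec v_1+\vec z$ is the standard device for the \emph{scalar} problem, but in the coupled system the correction $\vec v_1=A^{-1}E\vec v_0''$ mixes all the $\eps_j$ through the off-diagonal entries of $A^{-1}$, so each component of $\vec v_1$ and of the boundary data for $\vec z$ picks up the worst factor $\eps_n$ rather than its own $\eps_i$. The obstruction you flagged is therefore real for that route and not easily repaired without essentially rebuilding the $\vec v''$ argument. The moral: stay with the BVP for $\vec v''$ and let Lemma~\ref{lexact} supply the componentwise powers of $\eps_i$.
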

\begin{proof}
The bound on $\vec{v}$ is an immediate consequence of the defining
equations for $\vec{v}$ and Lemma
\ref{stab}.\\
The bounds on $\vec{v}^{\prime}$ and $\vec{v}^{\prime\prime}$ are
found as follows. Differentiating twice the equation for $\vec{v}$,
it is not hard to see that $\vec{v}^{\prime\prime}$ satisfies
\begin{equation}\label{4a}
\vec{L}\vec{v}^{\prime\prime}=\vec{g},\;\; \text{where} \;\;
\vec{g}=
\vec{f}^{\prime\prime}-A^{\prime\prime}\vec{v}-2A^{\prime}\vec{v}^{\prime.}
\end{equation} Also the defining equations for $\vec{v}$ yield at $x=0,\;\; x=1$
\begin{equation}\label{4b}
\vec{v}^{\prime\prime}(0)=\vec{0}, \;\;
\vec{v}^{\prime\prime}(1)=\vec{0}.
\end{equation} Applying Lemma \ref{stab} to $\vec{v}^{\prime\prime}$ then gives
\begin{equation}\label{smooth1} ||\vec{v}^{\prime\prime}|| \leq C(1+||\vec{v}^{\prime}||).
\end{equation}
Choosing $i^*,\; x^*,$ such that $1 \leq i^* \leq n,\; x^* \in
(0,1)$ and
\begin{equation}\label{smooth2} v_{i^*}^{\prime}(x^*)=||\vec{v}^{\prime}|| \end{equation}
and using a Taylor expansion it follows that, for any $y \in
[0,1-x^*]$ and some $\eta$, $x^*\;<\;\eta\;<\;x^*+y$,
\begin{equation}\label{smooth3} v_{i^*}(x^*+y) = v_{i^*}(x^*)+y\;v_{i^*}'(x^*)+\dfrac{y^2}{2}\;v_{i^*}^{\prime\prime}(\eta).\end{equation}
Rearranging (\ref{smooth3}) yields
\begin{equation}\label{smooth4}v_{i^*}^{\prime}(x^*)=\frac{v_{i^*}(x^*+y)-v_{i^*}(x^*)}{y}-\frac{y}{2}v_{i^*}^{\prime\prime}(\eta)
\end{equation}
and so, from (\ref{smooth2}) and (\ref{smooth4}),
\begin{equation}\label{smooth5}
||\vec{v}^{\prime}|| \leq \frac{2}{y}||\vec{v}||+
\frac{y}{2}||\vec{v}^{\prime\prime}||.
\end{equation}
Using (\ref{smooth5}), (\ref{smooth1}) and the bound on $\vec{v}$
yields
\begin{equation}\label{smooth6}
(1-\frac{Cy}{2})||\vec{v}^{\prime\prime}|| \leq C(1+\frac{2}{y}).
\end{equation}
Choosing $y=\min(\frac{1}{C},1-x^*)$, (\ref{smooth6}) then gives $||\vec{v}^{\prime\prime}|| \leq C$ and (\ref{smooth5}) gives $||\vec{v}^{\prime}|| \leq C$ as
required. The bounds on $\vec{v}^{(3)}, \vec{v}^{(4)}$ are obtained
by a similar argument.\eop\end{proof}
\section{Improved estimates}
The layer functions $B^{l}_{i}, \; B^{r}_{i}, \; B_{i}, \; i=1,\;
\dots , \; n,\;$, associated with the solution $\;\vec u$, are
defined on $[0,1]$ by
\[B^{l}_{i}(x) = e^{-x\sqrt{\alpha/\eps_i}},\;B^{r}_{i}(x) =
B^{l}_{i}(1-x),\;B_{i}(x) = B^{l}_{i}(x)+B^{r}_{i}(x).\] The following elementary properties of these layer functions, for all $1 \leq i < j \leq n$ and $0 \leq x <
y \leq
1$, should be noted:\\
(a)\;$B^{l}_i(x)\; <\; B^{l}_j(x),\;\;B^{l}_i(x)\;
>\; B^{l}_i(y), \;\;0\;<\;B^{l}_i(x)\;\leq\;1$.\\
(b)\;$B^{r}_i(x)\; <\; B^{r}_j(x),\;\;B^{r}_i(x)\; <\;
B^{r}_i(y), \;\;0\;<\;B^{r}_i(x)\;\leq\;1$.\\
(c)\;$B_{i}(x)$ is monotone decreasing (increasing) for
increasing $x \in [0,\frac{1}{2}] ([\frac{1}{2},1])$.\\
(d)\;$B_{i}(x) \leq 2B_{i}^{l}(x)$ for $x \in [0,\frac{1}{2}]$.

\begin{definition}
For $B_i^l$, $B_j^l$, each $i,j, \;\;1 \leq i \neq j \leq n$ and
each $s, s>0$, the point $x^{(s)}_{i,j}$ is defined by
\begin{equation}\label{x1}\frac{B^l_i(x^{(s)}_{i,j})}{\varepsilon^s _i}=
\frac{B^l_j(x^{(s)}_{i,j})}{\varepsilon^s _j}. \end{equation}
\end{definition}
It is remarked that
\begin{equation}\label{x2}\frac{B^r_i(1-x^{(s)}_{i,j})}{\varepsilon^s _i}=
\frac{B^r_j(1-x^{(s)}_{i,j})}{\varepsilon^s _j}. \end{equation} In
the next lemma the existence and uniqueness of the points
$x^{(s)}_{i,j}$  are shown. Various properties are also established.
\begin{lemma}\label{layers} For all $i,j$, such that $1 \leq i < j \leq
n$ and $0<s \leq 3/2$,  the points $x_{i,j}$ exist, are uniquely
defined and satisfy the following inequalities
\begin{equation}\label{x3}
\frac{B^l_{i}(x)}{\eps^s _i} > \frac{B^l_{j}(x)}{\eps^s _j},\;\; x
\in [0,x^{(s)}_{i,j}),\;\; \frac{B^l_{i}(x)}{\eps^s _i} <
\frac{B^l_{j}(x)}{\eps^s _j}, \; x \in (x^{(s)}_{i,j}, 1].\end{equation}\\
Moreover
\begin{equation}\label{x4}x^{(s)}_{i,j}< x^{(s)}_{i+1,j}, \; \mathrm{if} \;\; i+1<j \;\;
\mathrm{and} \;\; x^{(s)}_{i,j}<
x^{(s)}_{i,j+1}, \;\; \mathrm{if} \;\; i<j. \end{equation} \\
Also
\begin{equation}\label{newbound}
x^{(s)}_{i,j}< 2s\sqrt{\frac{\eps_j}{\alpha}}\;\; and \;\;
x^{(s)}_{i,j} \in
(0,\frac{1}{2})\;\; \mathrm{if} \;\; i<j. \end{equation}\\
Analogous results hold for the $B^r_i$, $B^r_j$ and the points $1-x^{(s)}_{i,j}.$\\
\end{lemma}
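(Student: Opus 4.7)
The plan is to take logarithms in (\ref{x1}) and work with an explicit closed-form expression for $x^{(s)}_{i,j}$. Since $B^l_k(x) = e^{-x\sqrt{\alpha/\eps_k}}$, the defining equation is equivalent to
\begin{equation*}
x\bigl(\sqrt{\alpha/\eps_i} - \sqrt{\alpha/\eps_j}\bigr) = s\log(\eps_j/\eps_i),
\end{equation*}
which, since $\eps_i < \eps_j$, is affine in $x$ with a strictly positive coefficient and a strictly positive right-hand side. Hence there is a unique positive solution, namely
\begin{equation*}
x^{(s)}_{i,j} = \frac{s\log(\eps_j/\eps_i)}{\sqrt{\alpha/\eps_i} - \sqrt{\alpha/\eps_j}}.
\end{equation*}
This settles existence and uniqueness. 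The inequalities (\ref{x3}) then follow at once because the difference $\log(B^l_i(x)/\eps^s_i) - \log(B^l_j(x)/\eps^s_j)$ is a strictly decreasing affine function of $x$, positive at $x=0$ and vanishing at $x^{(s)}_{i,j}$.

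To prove the orderings (\ref{x4}), I would introduce $a = 1/\sqrt{\eps_i}$ and $c = 1/\sqrt{\eps_j}$ and rewrite the closed form as
\begin{equation*}
x^{(s)}_{i,j} = \frac{2s}{\sqrt{\alpha}} \cdot \frac{\log(a/c)}{a-c}.
\end{equation*}
Passing from $i$ to $i+1$ decreases $a$ while holding $c$ fixed, and passing from $j$ to $j+1$ decreases $c$ while holding $a$ fixed. Under the substitution $u = a/c > 1$, both monotonicity assertions reduce to the single elementary fact that $u \mapsto (\log u)/(u-1)$ is strictly decreasing on $(1,\infty)$, which in turn is a standard calculus exercise based on the inequality $u - 1 > \log u$. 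This reduction is the only substantive step in the argument, and I expect it to be the main (though still routine) technical ingredient.

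Finally, the upper bound in (\ref{newbound}) rests on the elementary inequality $\log u < u-1$ for $u > 1$. Applied with $u = a/c$ this gives $c\log(a/c) < a-c$, so
\begin{equation*}
x^{(s)}_{i,j} < \frac{2s}{\sqrt{\alpha}\,c} = 2s\sqrt{\eps_j/\alpha}.
\end{equation*}
Since $j \le n$ and $\sqrt{\eps_n} \le \sqrt{\alpha}/6$ by assumption (\ref{a3}), this yields $x^{(s)}_{i,j} < s/3 \le 1/2$ for $s \le 3/2$, placing $x^{(s)}_{i,j}$ in $(0,1/2)$ as claimed. The analogous statements about $B^r_i$, $B^r_j$ and the points $1-x^{(s)}_{i,j}$ follow by applying the same argument to the reflected variable $1-x$, using the identity (\ref{x2}).
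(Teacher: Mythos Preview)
Your approach is essentially the paper's: both extract an explicit formula for $x^{(s)}_{i,j}$ by taking logarithms, deduce existence, uniqueness and \eqref{x3} from monotonicity of an exponential/affine expression, prove the upper bound \eqref{newbound} from $\log u<u-1$, and reduce the orderings \eqref{x4} to an elementary monotonicity statement. The paper parametrises via $\sqrt{\eps_k}=e^{-p_k}$ and appeals to the monotonicity of $x\mapsto (e^x-1)/x$, whereas you parametrise via $a=1/\sqrt{\eps_i}$, $c=1/\sqrt{\eps_j}$; these are equivalent reformulations.

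There is, however, a genuine slip in your treatment of the second inequality in \eqref{x4}. With $u=a/c$ you have
\[
\frac{\log(a/c)}{a-c}=\frac{1}{c}\cdot\frac{\log u}{u-1},
\]
and when you pass from $j$ to $j+1$ you are varying $c$ with $a$ fixed. Then \emph{both} $u$ and the prefactor $1/c$ change, so the expression is not a function of $u$ alone, and the monotonicity of $u\mapsto(\log u)/(u-1)$ does not by itself settle the question (indeed that factor moves the wrong way). The clean fix is to rewrite, using $c=a/u$,
\[
\frac{\log(a/c)}{a-c}=\frac{1}{a}\cdot\frac{u\log u}{u-1},
\]
and note that $u\mapsto \dfrac{u\log u}{u-1}$ is strictly \emph{increasing} on $(1,\infty)$, since its derivative has numerator $u-1-\log u>0$. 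Alternatively, observe once and for all that $(a,c)\mapsto\dfrac{\log a-\log c}{a-c}$ is the divided difference of the concave function $\log$ and is therefore strictly decreasing in each argument separately; this yields both parts of \eqref{x4} simultaneously. Either repair is routine, but as written your reduction for the second ordering is incorrect.
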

\begin{proof} Existence, uniqueness and (\ref{x3}) follow
from the observation that the ratio of the two sides of (\ref{x1}),
namely
\[\frac{B^l_{i}(x)}{\eps^s _i}\frac{\eps^s _j}{B^l_{j}(x)}=
\frac{\eps^s _j}{\eps^s _i} \exp{(-\sqrt{\alpha}
x(\frac{1}{\sqrt{\eps_i}}-\frac{1}{\sqrt{\eps_j}}))},\] is
monotonically decreasing from the value $\frac{\eps^s_j}{\eps^s_i}
>1$ as $x$ increases
from $0$.\\
The point $x^{(s)}_{i,j}$ is the unique point $x$ at which this
ratio has the value $1.$ Rearranging (\ref{x1}), and using the
inequality $\ln x <x-1$ for all $x>1$, gives
\begin{equation}\label{x5}x^{(s)}_{i,j} = 2s\ds\left[
\frac{\ln(\frac{1}{\sqrt{\eps_i}})-
\ln(\frac{1}{\sqrt{\eps_j}})}{\sqrt{\alpha}(\frac{1}{\sqrt{\eps_i}}-
\frac{1}{\sqrt{\eps_j}})}\right]=
\frac{2s\;\ln(\frac{\sqrt{\eps_j}}{\sqrt{\eps_i}})}{\sqrt{\alpha}(\frac{1}{\sqrt{\eps_i}}-
\frac{1}{\sqrt{\eps_j}})}<
2s\sqrt{\frac{\eps_j}{\alpha}},\end{equation} which
is the first part of (\ref{newbound}). The second part follows immediately from this and (\ref{a3}).\\
To prove (\ref{x4}), writing $\sqrt{\eps_k} = \exp(-p_k)$, for some
$p_k
> 0$ and all $k$, it follows that
\[x^{(s)}_{i,j}=\frac{2s(p_i -p_j)}{\sqrt{\alpha}(\exp{p_i} -\exp{p_j})}.\] The
inequality $x^{(s)}_{i,j}< x^{(s)}_{i+1,j}$ is equivalent to
\[\frac{p_i -p_j}{\exp{p_i} -\exp{p_j}}<\frac{p_{i+1} -p_j}{\exp{p_{i+1}} -\exp{p_j}}, \]
which can be written in the form
\[(p_{i+1}-p_j)\exp(p_i-p_j)+(p_{i}-p_{i+1})-(p_{i}-p_j)\exp(p_{i+1}-p_j)>0. \]
With $a=p_i-p_j$ and $b=p_{i+1}-p_j$ it is not hard to see that
$a>b>0$ and $a-b=p_i-p_{i+1}$. Moreover, the previous inequality is
then equivalent to
\[\frac{\exp{a}-1}{a}>\frac{\exp{b}-1}{b}, \] which is true because $a>b$ and proves
the first part of (\ref{x4}). The second part is proved by a similar
argument.\\ The analogous results for the $B^r_i$, $B^r_j$ and the
points $1-x^{(s)}_{i,j}$ are proved by a similar argument.\eop
\end{proof}
%
In the following lemma sharper estimates of the smooth component are
presented.
\begin{lemma}\label{lsmooth2}
Let $\;A(x)\;$ satisfy (\ref{a1}) and (\ref{a2}). Then the smooth
component $\;\vec v\;$ of the solution $\;\vec u\;$ of \eqref{BVP}
satisfies for $\;i=1,\cdots,n, \;k=0,1,2,3\;$ and $\;x\in\oln\Omega$
\[|v_i^{(k)}(x)|\;\le\;C\;\ds\left(1+\sum_{q=i}^{n}\frac{B_q(x)}{\eps_q ^{\frac{k}{2}-1}}\right).\]
\end{lemma}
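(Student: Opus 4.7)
The plan is to separate the routine cases from the nontrivial one.

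\textbf{Cases $k \in \{0,1,2\}$.} Here Lemma~\ref{lsmooth} already yields $|v_i^{(k)}(x)| \le C$, and since every summand on the right is non-negative the desired bound follows trivially from $C \le C\bigl(1 + \sum_{q=i}^n B_q(x)/\eps_q^{k/2-1}\bigr)$.

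\textbf{The case $k=3$.} Here the target bound is genuinely sharper than $|v_i^{(3)}(x)| \le C\eps_i^{-1/2}$ from Lemma~\ref{lsmooth}, since it localises the estimate into the boundary-layer regions where the $B_q$ are not exponentially small. I would carry out a further decomposition $\vec v = \vec p + \vec w$ built from a truncated asymptotic expansion
\[\vec p = \vec v_0 + E\vec v_1 + E^2\vec v_2, \qquad A\vec v_0 = \vec f, \quad A\vec v_j = \vec v_{j-1}'' \ \text{for}\ j=1,2.\]
A direct calculation shows $\vec L\vec p = \vec f - E^3\vec v_2''$, so $\vec L\vec w = E^3\vec v_2''$ with boundary data $\vec w(0) = -E\vec v_1(0) - E^2\vec v_2(0)$ (and analogously at $x=1$), both componentwise $O(\eps_i)$. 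Since each $\vec v_j$ is determined algebraically from the smooth data, the derivatives of $\vec p$ are bounded uniformly up to order four and, in particular, $|p_i^{(3)}(x)| \le C$ contributes the $1$ on the right of the target bound.

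\textbf{Barrier for $\vec w^{(3)}$.} To isolate the layer structure of $w_i^{(3)}$ I would apply Lemma~\ref{max} to the pair $\vec\Phi \pm \vec w^{(3)}$, using a componentwise vector barrier of the form
\[\Phi_k(x) = C + C\sum_{q=\max(k,i)}^{n}\frac{B_q(x)}{\sqrt{\eps_q}},\]
designed so that $\Phi_i(x)$ matches the right-hand side of the lemma exactly. Verifying $\vec L\vec\Phi \ge \vec 0$ uses the identity $B_q'' = (\alpha/\eps_q)B_q$, the row-sum bound $\sum_j a_{kj}(x) \ge \alpha$ from (\ref{a2}), the sign condition $a_{kj}\le 0$ for $k\ne j$ from (\ref{a1}), and the fact that the ordering $\eps_1<\cdots<\eps_n$ forces $\eps_k/\eps_q \le 1$ whenever $q \ge k$, so that the diffusion term $-\eps_k\Phi_k''$ is dominated by the reaction term $\Phi_k\sum_j a_{kj}$. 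The comparison points $x^{(s)}_{i,j}$ of Lemma~\ref{layers} are used to compare the various $B_q(x)/\sqrt{\eps_q}$ when distributing the layer contributions across subintervals of $[0,1]$.

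\textbf{Main obstacle.} The delicate step is the boundary estimate $|w_i^{(3)}(0)|, |w_i^{(3)}(1)| \le C\sum_{q=i}^n 1/\sqrt{\eps_q}$, since merely evaluating the once-differentiated equation at $x=0$ yields only the weaker bound $O(\eps_i^{-1})$. The purpose of carrying the expansion $\vec p$ up to the $E^2$ term is precisely to cancel the most singular contribution to $w_i^{(3)}(0)$ so that the boundary data match the barrier. Making this cancellation quantitative, and simultaneously verifying the componentwise operator inequality $\vec L\vec\Phi \ge \vec 0$ in the presence of the cross-terms $a_{kj}$ with $k\ne j$, is the central technical challenge of the proof.
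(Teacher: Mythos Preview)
Your treatment of $k\in\{0,1,2\}$ is fine and matches the paper's in spirit. For $k=3$, however, you have taken a substantially harder route than the paper, and the obstacle you flag at the end is real and unresolved.

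The paper does \emph{not} build an asymptotic expansion of $\vec v$ and then attack $\vec w^{(3)}$ with a barrier. Instead it exploits a simple but decisive observation: since $\vec v(0)=\vec u_0(0)$ and $\vec v(1)=\vec u_0(1)$, the defining equation $E\vec v''=A\vec v-\vec f$ forces $\vec v''(0)=\vec v''(1)=\vec 0$. Hence $\vec p:=\vec v''$ solves a boundary value problem of exactly the same structure as the original, namely $\vec L\vec p=\vec g$ with homogeneous boundary data and $\|\vec g\|,\|\vec g'\|\le C$ (this is \eqref{4a}--\eqref{4b}). One then applies the Shishkin decomposition \emph{to $\vec p$}: write $\vec p=\vec q+\vec r$ with $\vec q$ smooth and $\vec r$ singular, invoke Lemma~\ref{lsmooth} for $|q_i'|\le C$ and Lemma~\ref{lsingular} for $|r_i'(x)|\le C\sum_{q=i}^n B_q(x)/\sqrt{\eps_q}$, and read off $v_i^{(3)}=p_i'=q_i'+r_i'$. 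No new barrier is needed; the layer structure comes for free from Lemma~\ref{lsingular}.

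Compared with this, your plan has two genuine gaps. First, applying Lemma~\ref{max} to $\vec\Phi\pm\vec w^{(3)}$ requires knowing $\vec w^{(3)}$ at the endpoints, and your expansion to order $E^2$ controls $w_i(0)=O(\eps_i)$ but does not by itself yield $|w_i^{(3)}(0)|\le C/\sqrt{\eps_i}$; extracting that from the differentiated equation drags in $w_j'(0)$, which is not given. Second, to bound $\vec L\vec w^{(3)}$ you need pointwise control of $\vec w,\vec w',\vec w''$ with the correct layer structure already in hand, so there is a bootstrap you have not set up. The paper's ``drop one level'' device---decompose $\vec v''$ rather than $\vec v$---sidesteps both issues, because the boundary values of $\vec v''$ vanish exactly and the required derivative bounds on $\vec q$ and $\vec r$ are precisely the content of Lemmas~\ref{lsmooth} and~\ref{lsingular}.
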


\begin{proof}Define a barrier function
\[\vec\psi^\pm(x)\;=\;C[1+B_n(x)]\vec e\;\pm\;\vec v^{(k)}(x),\;\;k=0,1,2\;\;\;\text{and}\;\;\;x\in\oln\Omega.\]
Using Lemma \ref{max}, we find that $\;\vec L\vec\psi^\pm(x)\;\ge\;\vec 0\;$ and $\;\vec\psi^\pm(0)\;\ge\;\vec 0,\;\vec\psi^\pm(1)\;\ge\;\vec 0\;$ for proper choices of the constant $\;C.\;$\\
Thus using Lemma \ref{lsmooth} we conclude that for $\;k=0,1,2,\;$
\begin{equation}\label{4e}
|v_i^{(k)}(x)|\;\le\;C[1+B_n(x)],\;\;x\in\oln\Omega.
\end{equation}
Consider the system of equations \eqref{4a}, \eqref{4b} satisfied by $\;\vec v^{\prime\prime},$
and note that $\;\parallel\vec g^{\prime}\parallel\;\le\;C\;$ from Lemma \ref{lsmooth}.\\
For convenience let $\;\vec p\;$ denote $\;\vec v^{\prime\prime}\;$ then
\begin{equation}\label{4c}
\vec L\vec p\;=\;\vec g,\;\;\vec p(0)\;=\;\vec 0,\;\vec p(1)\;=\;\vec 0.
\end{equation}
Let $\;\vec q\;$ and $\;\vec r\;$ be the smooth and singular components of $\;\vec p\;$ given by
\[\vec L\vec q\;=\;\vec g,\;\;\vec q(0)\;=\;A(0)^{-1}\vec g(0),\;\vec q(1)\;=\;A(1)^{-1}\vec g(1)\]
and
\[\vec L\vec r\;=\;\vec 0,\;\;\vec r(0)\;=\;-\vec q(0),\;\vec r(1)\;=\;-\vec q(1).\]
Using Lemmas \ref{lsmooth} and \ref{lsingular} we have, for
$\;i=1,\cdots,n\;$ and $\;x\in\oln\Omega,\;$
\[\begin{array}{lcl}
|q_i^{\prime}(x)|&\le&C,\\
|r_i^{\prime}(x)|&\le&C\ds\left[\dfrac{B_i(x)}{\sqrt\vr_i}+\cdots+\dfrac{B_n(x)}{\sqrt\vr_n}\right].
\end{array}\]
Hence, for $\;x\in\oln\Omega\;$ and $\;i=1,\cdots,n,\;$
\begin{equation}\label{4d}
|v_i^{\prime\prime\prime}(x)|\;\le\;|p_i^{\prime}(x)| \leq
C\ds\left[1+\dfrac{B_i(x)}{\sqrt\vr_i}+\cdots+\dfrac{B_n(x)}{\sqrt\vr_n}\right].
\end{equation}
From \eqref{4e} and \eqref{4d}, we find that for $\;k=0,1,2,3\;$ and $\;x\in\oln\Omega,\;$
\[|v_i^{(k)}(x)|
\;\le\;C\;\ds\left[1+\eps_i^{1-\frac{k}{2}}B_i(x)+\cdots+\eps_n^{1-\frac{k}{2}}B_n(x)\right]. \;\eop\]. \end{proof}
{\bf Remark :} It is interesting to note that the above estimate
reduces to the estimate of the smooth component of the solution of
the scalar problem given in \cite{MORS} when $\;n=1.\;$\\
Bounds on the singular components $\vec{w}^l,\; \vec{w}^r$ of
$\vec{u}$ and their derivatives are contained in
\begin{lemma}\label{lsingular} Let $A(x)$ satisfy (\ref{a1}) and
(\ref{a2}).Then there exists a constant $C,$ such that, for each $x
\in [0,1]$ and $i=1,\; \dots , \; n$,
\[\left|w^l_i(x)\right| \;\le\; C B^l_{n}(x),\;\;
\left|w_i^{l,\prime}(x)\right| \;\le\; C\sum_{q=i}^n
\frac{B^l_{q}(x)}{\sqrt{\eps_q}},\]
\[\left|w_i^{l,\prime\prime}(x)\right| \;\le\; C\sum_{q=i}^n
\frac{B^l_{q}(x)}{\eps_q},\;\; \left|w_i^{l,(3)}(x)\right| \;\le\;
C\sum_{q=1}^n \frac{B^l_{q}(x)}{\eps_q^{3/2}},\]
\[\left|\eps_i w_i^{l,(4)}(x)\right|\;\le\; C\sum_{q=1}^n \frac{B^l_{q}(x)}{\eps_q}.\]
Analogous results hold for $w^r_i$ and its derivatives.
\end{lemma}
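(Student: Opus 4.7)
The plan is to establish the five bounds in the order they appear, using a combination of direct barrier arguments via Lemma \ref{max}, induction on the component index $i$, and algebraic manipulation of the equation after differentiation. The workhorse identity, obtained by direct computation, is
\[\bigl(\vec L(B^l_q(x)\vec e)\bigr)_i \;=\; B^l_q(x)\Bigl(-\frac{\alpha\eps_i}{\eps_q}+\sum_{j=1}^n a_{ij}(x)\Bigr),\]
which is non-negative whenever $\eps_i\le\eps_q$, by (\ref{a2}). This single observation explains both why $B^l_n$ alone suffices to control $\vec w^l$ itself and why the sharper sums for the first two derivatives may be indexed from $q=i$ rather than $q=1$.

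For $|w_i^l(x)|\le CB^l_n(x)$ I would apply Lemma \ref{max} to $\vec\psi^{\pm}(x)=CB^l_n(x)\vec e\pm\vec w^l(x)$, with $C$ chosen so that $C\ge\|\vec w^l(0)\|$. The boundary inequalities are immediate from $B^l_n(0)=1$ and $\vec w^l(1)=\vec 0$, and the interior inequality $\vec L\vec\psi^{\pm}\ge\vec 0$ is precisely the $q=n$ case of the identity above. For the first- and second-derivative bounds I would proceed by induction on $i$ running from $n$ down to $1$. Differentiating $\vec L\vec w^l=\vec 0$ gives $\vec L\vec w^{l,\prime}=-A^\prime\vec w^l$, whose right-hand side is already controlled by $CB^l_n$. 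A mean-value argument on intervals of length $\sqrt{\eps_i}$ furnishes a starting point at which $w_i^{l,\prime}$ is controlled by $\eps_i^{-1/2}$ times the $L^\infty$ bound on $w_i^l$, and Lemma \ref{max} applied to the multi-scale barrier $\vec\Theta^{\pm}(x)=C\sum_{q=i}^{n}\eps_q^{-1/2}B^l_q(x)\vec e\pm\vec w^{l,\prime}(x)$ then promotes this starting bound into the claimed pointwise estimate. The same construction, applied to $\vec L\vec w^{l,\prime\prime}=-2A^\prime\vec w^{l,\prime}-A^{\prime\prime}\vec w^l$ with barrier $C\sum_{q=i}^n\eps_q^{-1}B^l_q\vec e$, yields the second-derivative bound.

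The third- and fourth-derivative bounds follow from the algebraic identities $E\vec w^{l,(3)}=A\vec w^{l,\prime}+A^\prime\vec w^l$ and $E\vec w^{l,(4)}=A\vec w^{l,\prime\prime}+2A^\prime\vec w^{l,\prime}+A^{\prime\prime}\vec w^l$. Inverting $E$ introduces a factor $\eps_i^{-1}$ that cannot be absorbed uniformly by $\eps_q^{-3/2}$ once $q<i$, and this is what forces the summation to start at $q=1$ in the last two displayed bounds. A final barrier argument using $C\sum_{q=1}^n\eps_q^{-3/2}B^l_q\vec e$ cleans up the constant. The analogous statements for $w^r_i$ and its derivatives then follow by the reflection $x\mapsto 1-x$, which interchanges $B^l_q$ with $B^r_q$ and preserves the signs relevant to Lemma \ref{max}.

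The main obstacle is the consistency check in the barrier argument for $\vec w^{l,\prime}$ at the inductive step: one has to verify that $\vec L\vec\Theta^{\pm}\ge\vec 0$ holds componentwise (so that the inhomogeneity $-A^\prime\vec w^l$ is absorbed), and that the boundary values $\vec\Theta^{\pm}(0)$ and $\vec\Theta^{\pm}(1)$ are non-negative, the first using the mean-value starting point and the second using the exponential smallness of $B^l_q(1)$. Producing the right multi-scale barrier is the essential creative step; once the correct ansatz is in hand, the remaining derivative bounds unfold as bookkeeping.
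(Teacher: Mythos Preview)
Your barrier argument for $|w^l_i|\le CB^l_n$ is correct and matches the paper, and the extraction of the fourth-derivative bound from the lower-order ones is also sound. The gap is in the inductive step for $w_i^{l,\prime}$ and $w_i^{l,\prime\prime}$. You propose applying Lemma~\ref{max} to $\vec\Theta^{\pm}=C\sum_{q=i}^{n}\eps_q^{-1/2}B^l_q\,\vec e\pm\vec w^{l,\prime}$, but Lemma~\ref{max} is a system-wide maximum principle: $\vec L\vec\Theta^{\pm}\ge\vec 0$ must hold in \emph{every} component $j$, not just $j=i$. For $j>i$ your own workhorse identity gives, for the summand $q=i$, a contribution $C\eps_i^{-1/2}B^l_i(x)\bigl(-\alpha\eps_j/\eps_i+\sum_k a_{jk}\bigr)$, and $\eps_j/\eps_i$ is unbounded since the parameters are distinct and ordered. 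The positive terms with $q\ge j$ cannot compensate for this uniformly, so the interior inequality fails and the barrier does not deliver the bound. Your ``induction on $i$ from $n$ down to $1$'' has no mechanism to strip away the components $j>i$ that the coupled maximum principle drags in. (The mean-value step is also not a boundary estimate: it produces a bound at some interior point, not at $x=0$, so the boundary check $\vec\Theta^{\pm}(0)\ge\vec 0$ is separate and needs Lemma~\ref{lexact}.) The same obstruction reappears in your proposed ``final barrier argument'' for the third derivative.

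The paper sidesteps this by inducting on the \emph{system size} $n$ rather than on the component index. One first bounds $w_n^{l,\prime}$ via the barrier $CE^{-1/2}B^l_n\vec e$ (note the $j$-th entry is scaled by $\eps_j^{-1/2}$, not $\eps_q^{-1/2}$, which is what makes the interior check go through), and gets $w_n^{l,\prime\prime}$, $w_n^{l,(3)}$ directly from the $n$-th equation. Then the last row and column are deleted: the first $n-1$ equations form a genuine $(n-1)$-system $-\tilde E\tilde{\vec w}^{l,\prime\prime}+\tilde A\tilde{\vec w}^l=\vec g$ with inhomogeneity $g_i=-a_{i,n}w^l_n$ now a known quantity bounded by $CB^l_n$. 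This reduced system is split into smooth and singular parts $\vec q+\vec r$; Lemma~\ref{lsmooth} handles $\vec q$, and the inductive hypothesis (the present lemma for $n-1$ equations, with base case $n=1$ from \cite{MORS}) handles $\vec r$. Shrinking the system is precisely what removes the offending higher-indexed components and makes an $i$-dependent barrier legitimate.
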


\begin{proof}First we obtain the bound on $\vec{w}^l$. We define the two
functions $\vec{\theta}^{\pm}=CB^l_n\vec{e} \pm \vec{w}^l$. Then
clearly $\vec{\theta}^{\pm}(0) \geq \vec{0}, \;\;
\vec{\theta}^{\pm}(1) \geq \vec{0}$ and
$L\vec{\theta}^{\pm}=CL(B^l_n\vec{e})$. Then, for $i=1,\dots, n$,
$(L\vec{\theta}^{\pm})_i
=C(\sum_{j=1}^{n}a_{i,j}-\alpha\frac{\eps_i}{\eps_n})B^l_n >0$. By
Lemma \ref{max}, $\vec{\theta}^{\pm}\geq \vec{0}$, which leads to
the required bound on $\vec{w}^l$.

Assuming, for the moment, the bounds on the first and second
derivatives $w_i^{l,\prime}$ and $w_i^{l,\prime\prime}$, the system
of differential equations satisfied by $\vec{w}^l$ is differentiated
twice to get
\[-E\vec{w}^{l,(4)}+A\vec{w}^{l,\prime\prime}+2A^{\prime}\vec{w}^{l,\prime}+A^{\prime\prime}\vec{w}^l =\vec{0}.\] The required bounds on the $w_i^{l,(4)}$ follow from those on $w^l_i$, $w_i^{l,\prime}$ and
$w_i^{l,\prime\prime}$. It remains therefore to establish the bounds
on $w_i^{l,\prime}$,$w_i^{l,\prime\prime}$ and
$w_i^{l,\prime\prime\prime}$, for which the following mathematical
induction argument is used. It is assumed that the bounds hold for
all systems up to order $n-1$. It is then shown that the bounds hold
for order $n$. The induction argument is completed by observing that
the bounds for the scalar case $n=1$ are proved in \cite{MORS}.

It is now shown that under the induction hypothesis the required
bounds hold for $w_i^{l,\prime}$,$w_i^{l,\prime\prime}$ and
$w_i^{l,\prime\prime\prime}$. The bounds when $i=n$ are established
first.The differential equation for $w^l_n$ gives $\eps_n
w_n^{l,\prime\prime}=(A\vec{w}^l)_n$ and the required bound on
$w_n^{l,\prime\prime}$ follows at once from that for $\vec{w}^l$.
For $w_n^{l,\prime}$ it is seen from the bounds in Lemma
\ref{lexact}, applied to the system satisfied by $\vec{w}^l$, that
$|w_i^{l,\prime}(x)| \leq C\eps_i^{-\frac{1}{2}}$. In particular,
$|w_n^{l,\prime}(0)| \leq C\eps_n^{-\frac{1}{2}}$ and
$|w_n^{l,\prime}(1)| \leq C\eps_n^{-\frac{1}{2}}$. It is also not
hard to verify that
$\vec{L}\vec{w}^{l,\prime}=-A^{\prime}\vec{w}^l$. Using these
results, the inequalities $\eps_i < \eps_n, \; i<n $, and the
properties of $A$, it follows that the two barrier functions
$\vec{\theta}^{\pm}=CE^{-\frac{1}{2}}B^l_n \vec{e}\pm
\vec{w}^{l,\prime}$ satisfy the inequalities $\vec{\theta}^{\pm}(0)
\ge \vec{0},\; \vec{\theta}^{\pm}(1) \ge \vec{0} $ and $\vec{L}
\vec{\theta}^{\pm} \ge \vec{0}.$ It follows from Lemma \ref{max}
that $ \vec{\theta}^{\pm} \ge \vec{0}$ and in particular that its
$n^{th}$ component satisfies $|w_n^{l,\prime}(x)| \leq
C\eps_n^{-\frac{1}{2}}B^l_n(x)$ as required.\\
Now, consider
\begin{equation}\label{wn}
-\eps_n
w_n^{l,\prime\prime}(x)+a_{n1}(x)w^l_1(x)+a_{n2}(x)w^l_2(x)+\cdots+a_{nn}(x)w^l_n(x)\;=\;f_n(x).
\end{equation}
Differentiating (\ref{wn}) once, we get
\[\begin{array}{lcl}
-\eps_n w_n^{l,(3)}(x)&=&f_n^{\prime}(x)-\ds\sum_{j=1}^n \left(a_{nj}(x)w^l_j(x)\right)^{\prime}\\\\
|w_n^{l,(3)}(x)|&\le&C\,\eps_n^{-1}\ds\left[1+\sum_{j=1}^n
|w_j^{l,\prime}(x)|\right]\\\\
&\le&C\,\eps_n^{-1}\ds\left[\dfrac{B_1^l(x)}{\sqrt\eps_1}+\cdots+\dfrac{B_n^l(x)}{\sqrt\eps_n}\right]\\\\
&\le&C\,\ds\sum_{q=1}^n
\dfrac{B_q^l(x)}{\eps_q^{3/2}}.
\end{array}\]

To bound $w_i^{l,\prime}$, $w_i^{l,\prime\prime}$ and $w_i^{l,(3)}$
for $1 \le i \le n-1$ introduce $\tilde{\vec{w}}^l=(w^l_1,
\dots,w^l_{n-1})$. Then, taking the first $n-1$ equations satisfied
by $\vec{w}^l$, it follows that
\[-\tilde{E}\tilde{\vec{w}}^{l,\prime\prime}+\tilde{A}\tilde{\vec{w}}^l=\vec{g},\]
where $\tilde{E},\;\tilde{A}$ is the matrix obtained by deleting the
last row and column from $E,\;A$, respectively,  and the components
of $\vec{g}$ are $g_i=-a_{i,n}w^l_n$ for $1 \le i \le n-1$. Using
the bounds already obtained for
$w^l_n,\;w_n^{l,\prime},\;w_n^{l,\prime\prime}$ and
$w_n^{l,\prime\prime\prime}$, it is seen that $\vec{g}$ is bounded
by $CB^l_n (x)$, $\vec{g}^{\prime}$ by $C\frac{B^l_n
(x)}{\sqrt{\eps_n}}$, $\vec{g}^{\prime\prime}$ by $C\frac{B^l_n
(x)}{\eps_n}$ and $\vec{g}^{\prime\prime\prime}$ by
$C\ds\sum_{q=1}^n\frac{B^l_q (x)}{\eps_q^{3/2}}$. The boundary
conditions for $\tilde{\vec{w}}^l$ are
$\tilde{\vec{w}}^l(0)=\tilde{\vec{u}}(0)-\tilde{\vec{u}}^0(0)$,
$\tilde{\vec{w}}^l(1)=\vec{0}$, where $\vec{u}^0$ is the solution of
the reduced problem $\vec{u}^0=A^{-1}\vec{f}$, and are bounded by
$C(\parallel\vec{u}(0)\parallel+\parallel\vec{f}(0)\parallel)$ and
$C(\parallel\vec{u}(1)\parallel+\parallel\vec{f}(1)\parallel)$. Now
decompose $\tilde{\vec{w}}^l$ into smooth and singular components to
get
\[\tilde{\vec{w}}^l=\vec{q}+\vec{r}, \;\;\ \tilde{\vec{w}}^{l,\prime}=\vec{q}^{\prime}+\vec{r}^{\prime}.\]
Applying Lemma \ref{max}  to $\vec{q}$ and using the bounds on the
inhomogeneous term $\vec{g}$ and its derivatives
$\vec{g}^{\prime},\;\vec{g}^{\prime\prime}$ and $\vec{g}^{(3)}$ it
follows that $|\vec{q}^{\prime}(x)| \leq
C\frac{B^l_n(x)}{\sqrt{\eps_n}}$, $|\vec{q}^{\prime\prime}(x)| \leq
C\frac{B^l_n(x)}{\eps_n}$ and $|\vec{q}^{\prime\prime\prime}(x)|
\leq C\ds\sum_{q=1}^n\frac{B^l_q (x)}{\eps_q^{3/2}}$. Using
mathematical induction, assume that the result holds for all systems
with $n-1$ equations. Then Lemma \ref{lsingular} applies to
$\vec{r}$ and so, for $i=1, \dots, n-1$,
\[
|r^{\prime}_{i}(x)| \leq
C\sum_{q=i}^{n-1}\frac{B^l_{q}(x)}{\sqrt{\eps_q}},\;\;
|r^{\prime\prime}_{i}(x)| \leq
C\sum_{q=i}^{n-1}\frac{B^l_{q}(x)}{\eps_q},\;\;|r^{\prime\prime\prime}_{i}(x)|
\leq C\ds\sum_{q=1}^{n-1}\frac{B^l_q (x)}{\eps_q^{3/2}}.\] Combining
the bounds for the derivatives of $q_i$ and $r_i$, it follows that
\[
|w^{l,\prime}_{i}(x)| \leq
C\sum_{q=i}^n\frac{B^l_{q}(x)}{\sqrt{\eps_q}},\;\;
|w^{l,\prime\prime}_{i}(x)| \leq
C\sum_{q=i}^n\frac{B^l_{q}(x)}{\eps_q},\;\;|w^{l,\prime\prime\prime}_{i}(x)|
\leq C\sum_{q=1}^n\frac{B^l_{q}(x)}{\eps_q^{3/2}}.\] Thus, the
bounds on $w_i^{l,\prime}$, $w_i^{l,\prime\prime}$ and
$w_i^{l,\prime\prime\prime}$ hold for a system with $n$ equations,
as required. A similar proof of the analogous results for the right
boundary layer functions holds.\eop\end{proof}

\section{The Shishkin mesh}
 A piecewise
uniform mesh with $N$ mesh-intervals and mesh-points
$\{x_i\}_{i=0}^N$ is now constructed by dividing the interval
$[0,1]$ into $2n+1$ sub-intervals as follows
\[[0,\tau_1]\cup\dots\cup(\tau_{n-1},\tau_n]\cup(\tau_n,1-\tau_n]\cup(1-\tau_n,1-\tau_{n-1}]\cup\dots\cup(1-\tau_1,1].\]
The $n$ parameters $\tau_k$, which determine the points separating
the uniform meshes, are defined by
\begin{equation}\label{tau1}\tau_{n}=
\min\displaystyle\left\{\frac{1}{4},2\sqrt{\frac{\eps_n}{\alpha}}\ln
N\right\}\end{equation} and for $\;k=1,\;\dots \; ,n-1$
\begin{equation}\label{tau2}\tau_{k}=\min\displaystyle\left\{\frac{\tau_{k+1}}{2},2\sqrt{\frac{\eps_k}{\alpha}}\ln
N\right\}.\end{equation} Clearly \[
0\;<\;\tau_1\;<\;\dots\;<\;\tau_n\;\le\;\frac{1}{4}, \qquad
\frac{3}{4}\leq 1-\tau_n < \; \dots \;< 1-\tau_1 <1.\] Then, on the
sub-interval $\;(\tau_n,1-\tau_n]\;$ a uniform mesh with
$\;\frac{N}{2}\;$ mesh-intervals is placed, on each of the
sub-intervals
$\;(\tau_k,\tau_{k+1}]\;\tx{and}\;(1-\tau_{k+1},1-\tau_k],\;\;k=1,\dots,n-1,\;$
a uniform mesh of $\;\frac{N}{2^{n-k+2}}\;$ mesh-intervals is placed
and on both of the sub-intervals $\;[0,\tau_1]\;$ and
$\;(1-\tau_1,1]\;$ a uniform mesh of $\;\frac{N}{2^{n+1}}\;$
mesh-intervals is placed. In practice it is convenient to take
\begin{equation}\label{meshpts2} N=2^{n+p+1} \end{equation} for some
natural number $p$. It follows that in the sub-interval
$[\tau_{k-1},\tau_{k}]$ there are $N/2^{n-k+3}=2^{k+p-2}$
mesh-intervals. This construction leads to a class of $2^n$
piecewise uniform Shishkin
 meshes $M_{\vec{b}}$, where $\vec b$ denotes an $n$--vector with
$b_i=0$ if $\tau_i=\frac{\tau_{i+1}}{2}$ and $b_i=1$ otherwise. From
the above construction it clear that the only points at which the
meshsize can change are in a subset $J_{\vec{b}}$ of the set of
transition points $T_{\vec{b}}=\{\tau_k\}_{k=1}^n
\cup\{1-\tau_k\}_{k=1}^n$. It is not hard to see that the change in
the meshsize at each point $\tau_k$ is $2^{n-k+3}(d_k -d_{k-1})$,
where $d_k =\frac{\tau_{k+1}}{2}-\tau_k$ for $1 \leq k \leq n$, with
the conventions $d_0 =0,\; \tau_{n+1}=1/2.$ Notice that $d_k \ge 0$
and that $b_k=0$ if and only if $d_k=0$. It follows that
$M_{\vec{b}}$ is a classical uniform mesh when $\vec b = \vec
0.$\\
The following notation is now introduced: $H_j=x_{j+1}-x_j,
\;h_j=x_{j}-x_{j-1},\; \delta_j=x_{j+1}-x _{j-1},\;  J_{\vec{b}}=
\{x_j: H_j-h_j \neq 0\}$. Clearly, $J_{\vec{b}}$ is the set of
points at which the meshsize changes and $J_{\vec{b}}\subset
T_{\vec{b}}$. Note that, in general, $J_{\vec{b}}$ is a proper
subset of $T_{\vec{b}}$. Moreover, if $b_k =0$ then $H_k \leq h_k $
and if $b_k = b_{k-1} =0$ then $H_k = h_k $. In the latter case, it
follows that the meshsize does not change at $\tau_k$ or $1-\tau_k$.
\\
It is not hard to see also that
\begin{equation}\label{geom2}\tau_k \leq C \sqrt{\eps_k} \ln N, \; \;\; 1 \leq k \leq n, \end{equation}
\begin{equation}\label{geom7} h_k =2^{n-k+3}N^{-1}(\tau_k-\tau_{k-1}),\;\; H_k =2^{n-k+2}N^{-1}(\tau_{k+1}-\tau_{k}), \end{equation}
\begin{equation}\label{geom1}\delta_j =H_j +h_j \leq C\max \{H_j,
h_j \},\;\; 1 \leq j \leq N-1, \end{equation}
\begin{equation}\label{geom-1}
\tau_k=2^{-(j-k+1)}\tau_{j+1}\;\mathrm{when}\; b_k=\dots =b_j =0, \;
1 \leq k<j \leq n
\end{equation}
and
\begin{equation}\label{geom0}
B^l_k(\tau_k)=B^r_k (1-\tau_k)=N^{-2} \;\mathrm{when}\;\; b_k=1.
\end{equation}
The geometrical results in the following lemma are used later.
\begin{lemma} \label{s1} Assume that $b_k =1$.  Then the following inequalities hold
\begin{equation}\label{geom9} x^{(s)}_{k-1,k}\;\leq\;\tau_k-h_k \;\mathrm{for} \;
1<k\leq n.\end{equation}
\begin{equation}\label{geom10}
\frac{B_{i}^{l}(\tau_{k})}{\sqrt{\eps_i}}\leq
\frac{1}{\sqrt{\eps_k}} \;\;\mathrm{for}\;\; 1 \leq i,k \leq n.
\end{equation}
\begin{equation}\label{geom3} B_q^l(\tau_k-h_k)\leq
CB_q^l(\tau_k)\;\;\mathrm{for}\;\; 1 \leq k \leq q \leq n.
\end{equation}
\end{lemma}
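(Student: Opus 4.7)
The plan is to prove the three inequalities in the order (\ref{geom10}), (\ref{geom3}), (\ref{geom9}): the first rests on Lemma \ref{layers}, the second is a direct exponential estimate, and the third combines both.

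For (\ref{geom10}), I would split into cases on the relationship between $i$ and $k$. When $i \geq k$, the ordering $\eps_i \geq \eps_k$ together with $B_i^l(\tau_k) \leq 1$ gives the bound immediately. When $i < k$, I would apply Lemma \ref{layers} with $s=1/2$, which provides $x^{(1/2)}_{i,k} < \sqrt{\eps_k/\alpha}$. Since $b_k = 1$ forces $\tau_k = 2\sqrt{\eps_k/\alpha}\ln N$, we have $\tau_k > x^{(1/2)}_{i,k}$ once $\ln N \geq 1/2$, so (\ref{x3}) yields
\[\frac{B_i^l(\tau_k)}{\sqrt{\eps_i}} \;\leq\; \frac{B_k^l(\tau_k)}{\sqrt{\eps_k}} \;\leq\; \frac{1}{\sqrt{\eps_k}}.\]

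For (\ref{geom3}), I would write the ratio explicitly as
\[\frac{B^l_q(\tau_k - h_k)}{B^l_q(\tau_k)} \;=\; \exp\!\bigl(h_k\sqrt{\alpha/\eps_q}\bigr),\]
and use $q \geq k$, hence $\sqrt{\alpha/\eps_q} \leq \sqrt{\alpha/\eps_k}$, to reduce the problem to bounding $h_k\sqrt{\alpha/\eps_k}$. From $b_k = 1$ we have $\tau_k = 2\sqrt{\eps_k/\alpha}\ln N$, and (\ref{geom7}) gives $h_k \leq 2^{n-k+3}N^{-1}\tau_k$. Combining these yields $h_k\sqrt{\alpha/\eps_k} \leq 2^{n-k+4}N^{-1}\ln N$, which is bounded by a constant since $N^{-1}\ln N$ is. Exponentiating gives the claim.

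For (\ref{geom9}), Lemma \ref{layers} furnishes $x^{(s)}_{k-1,k} < 2s\sqrt{\eps_k/\alpha} \leq 3\sqrt{\eps_k/\alpha}$ for $s \leq 3/2$. Using $b_k = 1$ and the bound $h_k \leq 2^{n-k+3}N^{-1}\tau_k$ from (\ref{geom7}) gives
\[\tau_k - h_k \;\geq\; \tau_k\bigl(1 - 2^{n-k+3}/N\bigr) \;=\; 2\sqrt{\eps_k/\alpha}\,\ln N\,\bigl(1 - 2^{n-k+3}/N\bigr),\]
so the desired inequality reduces to $\ln N\,\bigl(1 - 2^{n-k+3}/N\bigr) \geq 3/2$. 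The main obstacle, in all three parts, is precisely this kind of bookkeeping: verifying that the geometric factor $2^{n-k+3}/N = 2^{2-k-p}$ arising from the fine-mesh scaling stays controllably small against the $\ln N$ growth, so that the resulting constants are uniform in the singular perturbation and discretisation parameters. Once the convention $N \geq N_0$ for some threshold $N_0$ depending only on $n$ and $\alpha$ is adopted (standard in Shishkin analyses), all three inequalities follow.
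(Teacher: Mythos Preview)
Your argument is correct and follows essentially the same strategy as the paper: all three parts rest on the explicit value $\tau_k = 2\sqrt{\eps_k/\alpha}\ln N$ forced by $b_k=1$, the mesh formula \eqref{geom7} for $h_k$, and the crossover bound \eqref{newbound} from Lemma~\ref{layers}. The only organisational difference is that the paper proves \eqref{geom9} first (via the tidy split $x^{(s)}_{k-1,k}\le \tau_k/2$ and $h_k\le \tau_k/2$) and then invokes it for \eqref{geom10}, whereas you handle \eqref{geom10} independently using $s=1/2$ in Lemma~\ref{layers}; both routes are valid and equally short.
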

\begin{proof}
To verify (\ref{geom9}) note that by Lemma \ref{layers}
\[x^{(s)}_{k-1,k} < 2s\frac{\sqrt{\eps_k}}{\sqrt{\alpha}} =
\frac{s\tau_k}{\ln N} = \frac{s\tau_k}{(n+p+1)\ln 2} \leq
\frac{\tau_k}{2}.
\]
Also, \[h_k
=\frac{2^{n-k+3}(\tau_{k}-\tau_{k-1})}{N}=2^{2-k-p}(\tau_{k}-\tau_{k-1})\leq
\frac{\tau_{k}-\tau_{k-1}}{2} < \frac{\tau_k}{2}.\] It follows that
$x^{(s)}_{k-1,k}+h_k \leq \tau_k$ as required.
\\
To verify (\ref{geom10}) note that if $i \ge k$ the result is
trivial. On the other hand, if $i<k$, by (\ref{geom9}) and Lemma
\ref{layers},
\[\frac{B_{i}^{l}(\tau_{k})}{\sqrt{\eps_i}}\leq
\frac{B_{i}^{l}(x^{(1)}_{i,k})}{\sqrt{\eps_i}}<
\frac{B_{k}^{l}(x^{(1)}_{i,k})}{\sqrt{\eps_k}}\leq
\frac{1}{\sqrt{\eps_k}}.\]
Finally, to verify (\ref{geom3}) note that
\begin{equation*}
h_k=(\tau_{k}-\tau_{k-1})2^{n-k+3}N^{-1} \leq \tau_k
2^{n-k+3}N^{-1}=\sqrt{\frac{\eps_k}{\alpha}} 2^{n-k+4} N^{-1}\ln
N.
\end{equation*}
and
\begin{equation*}
e^{2^{n-k+4} N^{-1}\ln N}=(N^{\frac{1}{N}})^{2^{n-k+4}} \leq C,
\end{equation*}
so
\begin{equation*}
\sqrt{\frac{\alpha}{\eps_q }}h_k \leq
\sqrt{\frac{\eps_k}{\eps_q}}2^{n-k+4} N^{-1}\ln N \leq 2^{n-k+4}
N^{-1}\ln N \leq C
\end{equation*}  since $k\leq q$.
It follows that
\begin{equation*}
B^l _q (\tau_k -h_k )=B^l _q (\tau_k)e^{\sqrt{\frac{\alpha}{\eps_q
}}h_k } \leq CB^l _q (\tau_k).
\end{equation*} as required. \eop \end{proof}
\section{The discrete problem}
In this section a classical finite difference operator with an
appropriate Shishkin mesh is used to construct a numerical method
for (\ref{BVP}), which is shown later to be essentially second order
parameter-uniform. In the scalar case, when $n=1$, this result is
well known. In \cite{HV} it is established for general values of $n$
in the special case where all of the singular perturbation
parameters are equal. For the general case considered here, the
error analysis is based on an extension of the techniques employed
in \cite{MORSS}. It is assumed henceforth that the problem data
satisfy whatever smoothness conditions are required.\\
The discrete two-point boundary value problem is now defined on any
mesh $M_{\vec b}$ by the finite difference method
\begin{equation}\label{discreteBVP}
-E\delta^2\vec{U} +A(x)\vec{U}=\vec{f}(x),  \qquad
\vec{U}(0)=\vec{u}(0),\;\;\vec{U}(1)=\vec{u}(1).
\end{equation}
This is used to compute numerical approximations to the exact
solution of (\ref{BVP}). Note that (\ref{discreteBVP}) can also be
written in the operator form
\[\vec{L}^N \vec{U}\;=\;\vec{f}, \qquad \vec{U}(0)=\vec{u}(0),\;\;\vec{U}(1)=\vec{u}(1)\]
where \[\vec{L}^N\;=\;-E\delta^2+A(x)\] and $ \delta^2,\; D^+ \;
\tx{and} \; D^{-}$ are the difference operators
\[\delta^2\vec{U}(x_j)\;=\;\dfrac{D^+\vec{U}(x_j)-D^-\vec{U}(x_j)}{\oln{h}_j}\]
\[D^+\vec{U}(x_j)\;=\;\dfrac{\vec{U}(x_{j+1})-\vec{U}(x_j)}{h_{j+1}}\;\;\;\tx{and}\;\;\;D^-\vec{U}(x_j)\;=\;\dfrac{\vec{U}(x_j)-\vec{U}(x_{j-1})}{h_j}.\]
with $\;\oln{h}_j\;=\;\dfrac{h_j+h_{j+1}}{2},\;\;\;\;h_j\;=\;x_{j}-x_{j-1}.$\\
The following discrete results are analogous to those for the
continuous case.
\begin{lemma}\label{dmax} Let $A(x)$ satisfy (\ref{a1}) and (\ref{a2}).
Then, for any mesh function $\vec{\Psi}$, the inequalities $\vec
{\Psi}(0)\;\ge\;\vec 0, \; \vec {\Psi}(1)\;\ge\;\vec 0
\;\rm{and}\;\vec{L}^N \vec{\Psi}(x_j)\;\ge\;\vec 0\;$ for
$1\;\le\;j\;\le\;N-1,\;$ imply that $\;\vec \Psi(x_j)\ge \vec 0\;$
for $0\;\le\;j\;\le\;N.\;$
\end{lemma}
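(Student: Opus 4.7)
The plan is to mirror the continuous argument of Lemma \ref{max} in the discrete setting, replacing the appeal to ``$\psi_{i^*}^{\prime\prime}(x^*)\ge 0$ at an interior minimum'' by its obvious analogue for the three-point operator $\delta^2$. I would argue by contradiction: suppose the conclusion fails, and choose indices $i^*$ and $j^*$ with
\[
\Psi_{i^*}(x_{j^*}) \;=\; \min_{1\le i\le n,\; 0\le j\le N}\Psi_i(x_j) \;<\; 0.
\]
The hypotheses $\vec\Psi(0)\ge\vec 0$ and $\vec\Psi(1)\ge\vec 0$ rule out $j^*\in\{0,N\}$, so $1\le j^*\le N-1$ and the operator $\vec L^N$ can be applied at $x_{j^*}$.

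Next, I would verify the two sign properties at $(i^*,j^*)$. First, since $\Psi_{i^*}(x_{j^*})\le \Psi_{i^*}(x_{j^*\pm1})$, the definition of $\delta^2$ as $(D^+-D^-)/\overline h_{j^*}$ with $\overline h_{j^*}>0$ gives $D^+\Psi_{i^*}(x_{j^*})\ge 0$ and $D^-\Psi_{i^*}(x_{j^*})\le 0$, hence
\[
\delta^2 \Psi_{i^*}(x_{j^*}) \;\ge\; 0,
\qquad\text{so}\qquad -\eps_{i^*}\,\delta^2\Psi_{i^*}(x_{j^*}) \;\le\; 0.
\]
Second, I split the reaction sum into its diagonal and off-diagonal parts. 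For $j\ne i^*$ we have $a_{i^*,j}(x_{j^*})\le 0$ by (\ref{a1}) and $\Psi_j(x_{j^*})\ge\Psi_{i^*}(x_{j^*})$ by minimality, so $a_{i^*,j}(x_{j^*})\Psi_j(x_{j^*})\le a_{i^*,j}(x_{j^*})\Psi_{i^*}(x_{j^*})$. Combining with the diagonal term and then using (\ref{a2}),
\[
\sum_{j=1}^n a_{i^*,j}(x_{j^*})\Psi_j(x_{j^*})
\;\le\; \Bigl(\sum_{j=1}^n a_{i^*,j}(x_{j^*})\Bigr)\Psi_{i^*}(x_{j^*})
\;<\; \alpha\,\Psi_{i^*}(x_{j^*}) \;<\; 0,
\]
where the final strict inequality uses $\Psi_{i^*}(x_{j^*})<0$.

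Adding the two displayed bounds yields $(\vec L^N\vec\Psi(x_{j^*}))_{i^*}<0$, contradicting the assumption $\vec L^N\vec\Psi(x_{j^*})\ge\vec 0$. The contradiction forces $\Psi_{i^*}(x_{j^*})\ge 0$, which is exactly the desired conclusion.

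I do not expect any real obstacle here: the only subtle point is the ``off-diagonal'' step, which requires both the sign condition $a_{i^*,j}\le 0$ for $j\ne i^*$ and the minimality of $\Psi_{i^*}(x_{j^*})$ over \emph{all} indices $(i,j)$ (not just over $j$ with $i$ fixed). The choice of the global minimum ensures both $\delta^2\Psi_{i^*}(x_{j^*})\ge 0$ and the correct orientation of the coupling inequalities, so the argument goes through verbatim for every principal sub-operator as well, just as in the continuous case.
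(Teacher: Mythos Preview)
Your proof is correct and follows essentially the same contradiction argument as the paper: pick the global minimum $\Psi_{i^*}(x_{j^*})$, observe it must lie in the interior, note $\delta^2\Psi_{i^*}(x_{j^*})\ge 0$ there, and conclude $(\vec L^N\vec\Psi(x_{j^*}))_{i^*}<0$. You spell out the off-diagonal step via (\ref{a1})--(\ref{a2}) more explicitly than the paper does, which is a welcome clarification but not a different approach.
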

\begin{proof} Let $i^*, j^*$ be such that
$\Psi_{i^*}(x_{j^{*}})=\min_{i,j}\Psi_i(x_j)$ and assume that the
lemma is false. Then $\Psi_{i^*}(x_{j^{*}})<0$ . From the hypotheses
we have $j^*\neq 0, \;N$ and $\Psi_{i^*}
(x_{j^*})-\Psi_{i^*}(x_{j^*-1})\leq 0, \; \Psi_{i^*}
(x_{j^*+1})-\Psi_{i^*}(x_{j^*})\geq 0,$ so
$\;\delta^2\Psi_{i*}(x_{j*})\;>\;0.\;$ It follows that
\[\ds\left(\vec{L}^N\vec{\Psi}(x_{j*})\right)_{i*}\;=
\;-\eps_{i*}\delta^2\Psi_{i*}(x_{j*})+\ds{\sum_{k=1}^n}
a_{i*,\;k}(x_{j*})\Psi_{k}(x_{j*})\;<\;0,\] which is a
contradiction, as required. \eop \end{proof}

An immediate consequence of this is the following discrete stability
result.
\begin{lemma}\label{dstab} Let $A(x)$ satisfy (\ref{a1}) and (\ref{a2}).
Then, for any mesh function $\vec \Psi $,
\[\parallel\vec \Psi(x_j)\parallel\;\le\;\max\left\{||\vec \Psi(0)||, ||\vec \Psi(1)||, \frac{1}{\alpha}||
\vec{L}^N\vec \Psi||\right\}, \; 0\leq j \leq N. \]
\end{lemma}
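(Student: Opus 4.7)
The plan is to mimic the continuous argument in Lemma \ref{stab}, replacing the differential maximum principle with its discrete counterpart (Lemma \ref{dmax}). Set
\[
M \;=\; \max\left\{\|\vec\Psi(0)\|,\;\|\vec\Psi(1)\|,\;\tfrac{1}{\alpha}\|\vec L^N\vec\Psi\|\right\}
\]
and introduce the two barrier mesh functions
\[
\vec\Theta^{\pm}(x_j) \;=\; M\,\vec e \;\pm\; \vec\Psi(x_j), \qquad 0\le j\le N,
\]
where $\vec e=(1,\dots,1)^T$. I will verify the three hypotheses of Lemma \ref{dmax} for $\vec\Theta^{\pm}$ and then read off the desired bound from the conclusion $\vec\Theta^{\pm}(x_j)\ge\vec 0$.

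The boundary inequalities $\vec\Theta^{\pm}(0)\ge\vec 0$ and $\vec\Theta^{\pm}(1)\ge\vec 0$ are immediate from the definition of $M$, since $M\ge\|\vec\Psi(0)\|$ and $M\ge\|\vec\Psi(1)\|$ bound every component of $\pm\vec\Psi$ at the endpoints. For the interior inequality I apply $\vec L^N$, using linearity and the fact that $\delta^2$ annihilates any constant mesh function (so $\delta^2(M\vec e)=\vec 0$). This yields
\[
\vec L^N\vec\Theta^{\pm}(x_j) \;=\; M\,A(x_j)\vec e \;\pm\; \vec L^N\vec\Psi(x_j).
\]
Condition (\ref{a2}) gives $\bigl(A(x_j)\vec e\bigr)_i=\sum_{k=1}^n a_{ik}(x_j)>\alpha$ for every $i$, so componentwise
\[
\bigl(\vec L^N\vec\Theta^{\pm}(x_j)\bigr)_i \;\ge\; M\alpha \;-\; \|\vec L^N\vec\Psi\| \;\ge\; 0,
\]
by the definition of $M$, which forces $M\alpha\ge\|\vec L^N\vec\Psi\|$.

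All three hypotheses of Lemma \ref{dmax} being met, we conclude $\vec\Theta^{\pm}(x_j)\ge\vec 0$ for $0\le j\le N$, i.e.\ $\pm\vec\Psi(x_j)\le M\vec e$ componentwise, which is exactly the claimed bound $\|\vec\Psi(x_j)\|\le M$. There is no real obstacle here: the only thing one must be careful about is observing that $\delta^2\vec e=\vec 0$ on any nonuniform mesh (true because $\vec e$ is constant, so both $D^+\vec e$ and $D^-\vec e$ vanish), and invoking (\ref{a2}) in the uniform lower-bound form $A(x)\vec e\ge\alpha\vec e$. The argument is the verbatim discrete analogue of the proof of Lemma \ref{stab}.
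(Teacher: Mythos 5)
Your proof is correct and follows exactly the paper's own argument: the paper likewise defines $\vec\Theta^{\pm}(x_j)=\max\{\|\vec\Psi(0)\|,\|\vec\Psi(1)\|,\tfrac{1}{\alpha}\|\vec L^N\vec\Psi\|\}\vec e\pm\vec\Psi(x_j)$, checks the boundary and interior sign conditions via the properties of $A$, and invokes Lemma \ref{dmax}. You have merely spelled out the details (that $\delta^2$ annihilates constants and that (\ref{a2}) gives $A(x)\vec e\ge\alpha\vec e$) which the paper leaves as "not hard to verify".
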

\begin{proof} Define the two functions
\[\vec{\Theta}^{\pm}(x_j)=\max\{||\vec{\Psi}(0)||,||\vec \Psi(1)||,\frac{1}{\alpha}||\vec{L^N}\vec{\Psi}||\}\vec{e}\pm
\vec{\Psi}(x_j)\]where $\vec{e}=(1,\;\dots \;,1)$ is the unit
vector. Using the properties of $A$ it is not hard to verify that
$\vec{\Theta}^{\pm}(0)\geq \vec{0}, \; \vec{\Theta}^{\pm}(1)\geq
\vec{0}$ and $\vec{L^N}\vec{\Theta}^{\pm}(x_j)\geq \vec{0}$. It
follows from Lemma \ref{dmax} that $\vec{\Theta}^{\pm}(x_j)\geq
\vec{0}$ for all $0\leq j \leq N$.\eop
\end{proof}
The following comparison result will be used in the proof of the
error estimate.
\begin{lemma}\label{comparison} Assume that the mesh functions $\vec{\Phi}$ and  $\vec{Z}$ satisfy, for
$j=1\;\dots \; N-1$,
\[||\vec{Z}(0)|| \leq \vec{\Phi}(0),\;\; ||\vec{Z}(1)|| \leq \vec{\Phi}(1),\;\;
||\vec(L^N)(\vec{Z}(x_j))|| \leq \vec(L^N)(\vec{\Phi}(x_j)).\] Then,
for $j=0\;\dots \; N$,
\[||\vec{Z}(x_j)||\vec{e}\leq \vec{\Phi}(x_j).\]
\end{lemma}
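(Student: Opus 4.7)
The plan is to apply the discrete maximum principle (Lemma \ref{dmax}) to a pair of vector-valued barrier functions, in direct analogy with the barrier construction used in the proofs of Lemmas \ref{stab} and \ref{dstab}. Specifically, I would introduce
\[\vec{\Theta}^{\pm}(x_j) \;:=\; \vec{\Phi}(x_j) \pm \vec{Z}(x_j), \qquad 0 \leq j \leq N,\]
and verify that each of $\vec{\Theta}^{+}$ and $\vec{\Theta}^{-}$ satisfies the three hypotheses of Lemma \ref{dmax}. The three assumptions of the present lemma are tailored precisely so that these hypotheses drop out mechanically.

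For the boundary conditions, for each component $i$ the first assumption gives
\[\Theta^{\pm}_i(0) \;=\; \Phi_i(0) \pm Z_i(0) \;\geq\; \Phi_i(0) - |Z_i(0)| \;\geq\; \Phi_i(0) - \|\vec{Z}(0)\| \;\geq\; 0,\]
and an identical calculation at $x_N = 1$ using the second assumption yields $\vec{\Theta}^{\pm}(1) \geq \vec{0}$. For the interior operator condition, linearity of $\vec{L}^N$ combined with the third assumption gives, for $1 \leq j \leq N-1$ and each $i$,
\[(\vec{L}^N \vec{\Theta}^{\pm})_i(x_j) \;=\; (\vec{L}^N \vec{\Phi})_i(x_j) \pm (\vec{L}^N \vec{Z})_i(x_j) \;\geq\; (\vec{L}^N \vec{\Phi})_i(x_j) - \|\vec{L}^N \vec{Z}(x_j)\| \;\geq\; 0.\]
Applying Lemma \ref{dmax} to both barrier functions then forces $\vec{\Theta}^{\pm}(x_j) \geq \vec{0}$ for every $0 \leq j \leq N$, which is exactly the componentwise inequality $\pm Z_i(x_j) \leq \Phi_i(x_j)$, i.e.\ $|Z_i(x_j)| \leq \Phi_i(x_j)$ for all $i$ and $j$; this yields the stated comparison estimate.

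I do not anticipate any substantive obstacle. The only decision in the proof is the choice of barrier, and the canonical choice $\vec{\Phi} \pm \vec{Z}$ works at once; each of the three hypotheses of Lemma \ref{dmax} then follows directly from linearity of $\vec{L}^N$ together with one of the stated assumptions. The argument is simply the discrete counterpart of the continuous barrier construction already used in Lemma \ref{stab}, and involves no new technical ingredients.
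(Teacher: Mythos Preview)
Your proposal is correct and follows essentially the same approach as the paper: define the barrier functions $\vec{\Phi}\pm\vec{Z}$, verify the hypotheses of Lemma~\ref{dmax} using the three stated assumptions and linearity of $\vec{L}^N$, and conclude. The paper's proof is more terse but identical in structure.
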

\begin{proof} Define the two mesh functions $\vec{\Psi}^{\pm}$ by
\[\vec{\Psi}^{\pm}=\vec{\Phi} \pm \vec{Z}.\] Then $\vec{\Psi}^{\pm}$
satisfies, for $j=1\;\dots \; N-1$,
 \[\vec{\Psi}^{\pm}(0)=\vec{\Psi}^{\pm}(1)=0, \qquad
 \vec(L)^N (\vec{\Psi}^{\pm})(x_j)\ge \vec{0}.\] The result follows
 from an application of Lemma \ref{dmax}.
\eop \end{proof}
\section{The local truncation error}
From Lemma \ref{dstab}, it is seen that in order to bound the
error $||\vec{U}-\vec{u}||$ it suffices to bound
$\vec{L}^N(\vec{U}-\vec{u})$. But this expression satisfies
\[
\vec{L}^N(\vec{U}-\vec{u})=\vec{L}^N(\vec{U})-\vec{L}^N(\vec{u})=
\vec{f}-\vec{L}^N(\vec{u})=\vec{L}(\vec{u})-\vec{L}^N(\vec{u})\]
\[=(\vec{L}-\vec{L}^N)\vec{u}
=-E(\delta^2-D^2)\vec{u}\] which is the local truncation of the
second derivative. Let $\vec{V}, \vec{W}$ be the discrete
analogues of $\vec{v}, \vec{w}$ respectively. Then, similarly,
\[\vec{L}^N(\vec{V}-\vec{v})=-E(\delta^2-D^2)\vec{v},\;\;\vec{L}^N(\vec{W}-\vec{w})=-E(\delta^2-D^2)\vec{w}.\] 
By the triangle inequality,
\begin{equation}\label{triangleinequality}
\parallel \vec L^N(\vec{U}-\vec{u})\parallel\;\leq\;\parallel
\vec{L}^N(\vec{V}-\vec{v})\parallel+\parallel
\vec{L}^N(\vec{W}-\vec{w})\parallel.
\end{equation}  Thus, the smooth and singular components
of the local truncation error can be treated separately. In view of
this it is noted that, for any smooth function $\psi$, the following
three distinct estimates of the local truncation error of its second
derivative hold:\\
for $x_j \in M_{\vec{b}}$
\begin{equation}\label{lte1}
|(\delta^2-D^2)\psi(x_j)|\;\le\; C\max_{s\;\in\;I_j}|\psi^{\prime\prime}(s)|,
\end{equation}
and
\begin{equation}\label{lte2}
|(\delta^2-D^2)\psi(x_j)|\;\le\;C\delta_j \max_{s\in I_j}|\psi^{(3)}(s)|, \end{equation} 
for $x_j \notin J_{\vec{b}}$
\begin{equation}\label{lte3}
|(\delta^2-D^2)\psi(x_j)|\;\le\;C\delta^2_j \max_{s\in
I_j}|\psi^{(4)}(s)|,
\end{equation}
for $\tau_k \in J_{\vec{b}}$
\begin{equation}\label{lte4}
|(\delta^2-D^2)\psi(\tau_k)|\;\le\;C(\;|H_k
-h_k|.|\psi^{(3)}(\tau_k)|+\delta^2_k \max_{s\in
I_k}|\psi^{(4)}(s)|\;).
\end{equation}
\section{Error estimate}
The proof of the error estimate is broken into two parts. In the
first a theorem concerning the smooth part of the error is proved.
Then the singular part of the error is considered. A barrier
function is now constructed, which is used in both parts of the
proof.\\
%
For each $k \in I_{\vec{b}}$, introduce the piecewise
linear polynomial \begin{equation*} \theta_k(x)=
\left\{ \begin{array}{l}\;\; \dfrac{x}{\tau_k}, \;\; 0 \leq x \leq \tau_k. \\
\;\; 1, \;\; \tau_k < x < 1-\tau_k.
\\ \;\; \dfrac{1-x}{\tau_k}, \;\; 1-\tau_k \leq x \leq 1.  \end{array}\right .\end{equation*}\\
It is not hard to verify that, for each $k \in I_{\vec{b}}$,
\begin{equation*} L^N(\theta_k(x_j)\vec{e})_i \ge
\left\{ \begin{array}{l}\;\; \alpha+\dfrac{2\eps_i}{ \tau_k
(H_k+h_k)},
\;\;\mathrm{if}\;\; x_j=\tau_k \in J_{\vec{b}} \\
\;\; \alpha \theta_{k}(x_j), \;\;  \mathrm{if} \;\;x_j \notin J_{\vec{b}}. \end{array}\right .\end{equation*}\\
On the Shishkin mesh $M_{\vec{b}}$ define the barrier function
$\vec{\Phi}$ by
\begin{equation}\label{barrier}\vec{\Phi}(x_j)=C\,N^{-2}(\ln N)^3 [1+\ds\sum_{k\in
I_{\vec{b}}}\theta_k (x_j)]\vec{e},\end{equation} where $C$ is any
sufficiently large constant.\\ Then $\vec{\Phi}$ satisfies
\begin{equation}\label{barrierbound1}0 \leq \Phi_{i}(x_j) \leq
C\,N^{-2}(\ln N)^{3},\;\; 1 \leq i \leq n.\end{equation} Also, , for
$x_j \notin J_{\vec{b}}$,
\begin{equation}\label{barrierbound3}
(L^N\vec{\Phi}(x_j))_i \ge CN^{-2}(\ln N)^{3}\end{equation} and, for
$\tau_k \in J_{\vec{b}}$,
\begin{equation*}\label{barrierbound2}(L^N\vec{\Phi}(\tau_k))_i \ge
C(1+\dfrac{\eps_i}{\sqrt{\eps_k}(H_k+h_k)})(N^{-1}\ln N)^{2},
\end{equation*} from which it follows that, for $\tau_k \in J_{\vec{b}}$ and $H_k \ge h_k$,
\begin{equation}\label{LHgeh}
(L^N\vec{\Phi}(\tau_k))_i \ge C(N^{-2}+\frac{\eps_i}{\sqrt{\eps_k
\eps_{k+1}}} N^{-1} \ln N)
\end{equation}
and, for $\tau_k \in J_{\vec{b}}$ and $H_k \le h_k$,
\begin{equation} \label{LHleh}
(L^N\vec{\Phi}(\tau_k))_i \ge C(N^{-2}+\frac{\eps_i}{\eps_k} N^{-1}
\ln N).
\end{equation}
The following theorem gives the error estimate for the smooth
component.
\begin{theorem}\label{smootherrorthm} Let $A(x)$ satisfy (\ref{a1}) and
(\ref{a2}). Let $\vec v$ denote the smooth component of the exact
solution from (\ref{BVP}) and $\vec V$ the smooth component of the
 discrete solution from (\ref{discreteBVP}).  Then
\begin{equation}\;\; ||\vec{V-v}|| \leq C\,N^{-2}(\ln N)^3. \end{equation}
\end{theorem}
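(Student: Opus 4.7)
The plan is to bound the truncation error $\vec{L}^N(\vec{V}-\vec{v}) = -E(\delta^2 - D^2)\vec{v}$ componentwise, show that it is dominated by $\vec{L}^N \vec{\Phi}$ for the barrier $\vec{\Phi}$ in (\ref{barrier}), and then apply Lemma \ref{comparison} together with (\ref{barrierbound1}). Since $\vec{V}(0) = \vec{v}(0)$ and $\vec{V}(1) = \vec{v}(1)$, only the interior inequality must be verified, and the argument splits according to whether $x_j \in J_{\vec{b}}$ or not, exploiting the matching distinction in the lower bounds (\ref{barrierbound3}), (\ref{LHgeh}), (\ref{LHleh}) for $\vec{L}^N\vec{\Phi}$.

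At a regular mesh point $x_j \notin J_{\vec{b}}$, I would apply (\ref{lte3}) combined with the standard bound $|v_i^{(4)}(x)| \leq C\eps_i^{-1}$ from Lemma \ref{lsmooth} to obtain
\[|\eps_i(\delta^2 - D^2)v_i(x_j)| \leq C\eps_i\,\delta_j^2\,\eps_i^{-1} = C\delta_j^2.\]
Since each sub-interval of the Shishkin mesh has mesh width at most $CN^{-1}\ln N$ (by (\ref{geom2}) and (\ref{geom7})), this yields at most $C(N^{-1}\ln N)^2$, which is dominated by the right-hand side of (\ref{barrierbound3}).

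At a transition point $\tau_k \in J_{\vec{b}}$, I would use (\ref{lte4}), which requires bounds on both $v_i^{(3)}(\tau_k)$ and $v_i^{(4)}$. For the third derivative I would invoke the improved estimate of Lemma \ref{lsmooth2},
\[|v_i^{(3)}(\tau_k)| \leq C\Big(1 + \sum_{q=i}^n \frac{B_q(\tau_k)}{\sqrt{\eps_q}}\Big),\]
and then use (\ref{geom10}) of Lemma \ref{s1} to conclude $B_q(\tau_k)/\sqrt{\eps_q} \leq C/\sqrt{\eps_k}$ for the relevant terms. Together with the explicit expression for $H_k, h_k$ from (\ref{geom7}) and $\tau_k \leq C\sqrt{\eps_k}\ln N$, the contribution $|H_k-h_k|\,|v_i^{(3)}(\tau_k)|$ in the truncation error is at most $C\eps_i/\sqrt{\eps_k\eps_{k+1}}\,N^{-1}\ln N$ when $H_k \geq h_k$ and at most $C\eps_i/\eps_k\,N^{-1}\ln N$ when $H_k \leq h_k$; these match (\ref{LHgeh}) and (\ref{LHleh}) respectively, while the $\delta_k^2\,|v_i^{(4)}|$ term is absorbed exactly as in the regular case.

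Combining the two cases shows $|\vec{L}^N(\vec{V}-\vec{v})(x_j)| \leq \vec{L}^N\vec{\Phi}(x_j)$ componentwise throughout the mesh, and Lemma \ref{comparison} yields $|\vec{V}(x_j) - \vec{v}(x_j)|\,\vec{e} \leq \vec{\Phi}(x_j)$; the required estimate then follows immediately from (\ref{barrierbound1}). The main obstacle is the bookkeeping at the transition points: without the sharper bound on $v_i^{(3)}$ from Lemma \ref{lsmooth2}, the crude estimate $|v_i^{(3)}| \leq C\eps_i^{-3/2}$ of Lemma \ref{lexact} would generate terms that cannot be absorbed into the barrier, so verifying that the sum over $q$ collapses to a factor involving $1/\sqrt{\eps_k}$ (or $1/\sqrt{\eps_k\eps_{k+1}}$) via (\ref{geom10}) is the genuine analytical content of the proof.
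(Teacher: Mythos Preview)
Your approach is essentially that of the paper: dominate $|\eps_i(\delta^2-D^2)v_i(x_j)|$ by $(\vec L^N\vec\Phi(x_j))_i$ and apply Lemma~\ref{comparison}, treating $x_j\notin J_{\vec b}$ via (\ref{lte3}) with Lemma~\ref{lsmooth} and $x_j=\tau_k\in J_{\vec b}$ via Lemma~\ref{lsmooth2} together with (\ref{geom10}); the paper phrases this as bounding the ratio $R(v_i(x_j))$ and uses (\ref{lte2}) rather than (\ref{lte4}) at transition points, a cosmetic difference. One point you gloss over: Lemma~\ref{s1}, and in particular (\ref{geom10}), is stated under the hypothesis $b_k=1$, whereas $\tau_k\in J_{\vec b}$ can also occur with $b_k=0$ and $b_{k-1}=1$ (then necessarily $H_k\le h_k$); the paper handles this as a separate subcase, in effect invoking (\ref{geom10}) at $\tau_{k-1}$ and using $\eps_i\le\eps_{k-1}$ for $i\le k-1$ to keep the resulting ratio bounded.
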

\begin{proof} An application of Lemma \ref{comparison} is made, using the above
barrier function. To prove the theorem it suffices to show that the
ratio
\begin{equation*}
R(v_i (x_j))= \frac{|\eps_{i}(\delta^2
-D^2)v_{i}(x_j)|}{|(L^N\vec{\Phi}(x_j))_i|},   \;\; x_j \in
M_{\vec{b}}
\end{equation*}
satisfies
\begin{equation}\label{ratio1}
R(v_i(x_j))\leq C.
\end{equation}
For $x_j \notin J_{\vec{b}}$ the bound \eqref{ratio1} follows
immediately from Lemma \ref{lsmooth}, (\ref{lte3})and
(\ref{geom1}).\\
Now assume that $x_j = \tau_k \in J_{\vec{b}}$. The required
estimates of the denominator of $R(v_i(\tau_k))$ are \eqref{LHgeh}
and \eqref{LHleh}. The numerator is bounded above using Lemma
\ref{lsmooth2} and \eqref{lte2}. The cases $b_k =1$ and $b_k=0$ are
treated separately and the inequalities \eqref{geom2},
\eqref{geom7}, \eqref{geom1},
\eqref{geom0} and \eqref{geom3} are used systematically.\\
Suppose first that $b_k=1$, then there are four possible subcases:\\
\begin{equation}
\begin{array}{lll}
 i \leq k,\;& H_k \ge h_k, \;&
R(v_i(\tau_k)) \le
C\eps_{k+1}.\\
                         & H_k \leq h_k, \; & R(v_i(\tau_k)) \le
                  C\eps_k.\\
 i > k,\;& H_k \ge h_k, \;&
R(v_i(\tau_k)) \le
C\eps_{k+1} \sqrt{\frac{\eps_k}{\eps_i}}.\\
                        & H_k \leq h_k, \; & R(v_i(\tau_k)) \le
                  C\eps_k\sqrt{\frac{\eps_k}{\eps_i}}.\\
\end{array}
\end{equation}
Secondly, if $b_k=0$, then $b_{k-1}=1$, because otherwise $\tau_k
\notin J_b$, and furthermore $H_k \le h_k$. There are two possible
subcases:
\begin{equation}
\begin{array}{lll}
 i \leq k-1,\;& H_k \leq h_k,
\;& R(v_i(\tau_k)) \le
C\eps_k(\frac{\eps_i}{\sqrt{\eps_{k-1}\eps_k}}+1).\\
 i > k-1,\;& H_k \leq h_k, \;&
R(v_i(\tau_k)) \le
C\eps_k\sqrt{\frac{\eps_k}{\eps_i}}.\\
\end{array}
\end{equation}
In all six subcases, because of the ordering of the $\eps_i$, it is
clear that condition \eqref{ratio1} is fulfilled. This concludes the
proof. \eop
\end{proof}
Before the singular part of the error is estimated the following
lemmas are established.
\begin{lemma}\label{est1}  Let $A(x)$ satisfy (\ref{a1}) and (\ref{a2}). Then, on each mesh $M_{\vec{b}}$,
for $1 \leq i \leq n$ and $1 \leq j \leq N$, the following estimates
hold
\begin{equation} |\eps_i(\delta^2-D^2)w^l_i(x_j)|\;\leq\;
C\frac{\delta^2 _j}{\eps_1}\;\; \mathrm{for}\;\; x_j \notin
J_{\vec{b}}.\end{equation} An analogous result holds for the
$w^r_i$.
\end{lemma}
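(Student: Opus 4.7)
The statement is a crude truncation-error bound for the singular component at mesh points interior to each uniform sub-mesh (i.e. points where the mesh size does not change). My plan is to combine the classical Taylor-type truncation estimate \eqref{lte3}, which is valid precisely for $x_j\notin J_{\vec b}$, with the pointwise bound on $\eps_i w_i^{l,(4)}$ furnished by Lemma~\ref{lsingular}.

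More concretely, I would proceed as follows. First, since $x_j\notin J_{\vec b}$, apply \eqref{lte3} to the scalar function $w^l_i$ to obtain
\[
|(\delta^2-D^2)w^l_i(x_j)| \le C\,\delta^2_j \max_{s\in I_j}|w_i^{l,(4)}(s)|.
\]
Multiplying by $\eps_i$ and invoking the last inequality of Lemma~\ref{lsingular}, which states $|\eps_i w_i^{l,(4)}(x)|\le C\sum_{q=1}^n B^l_q(x)/\eps_q$, gives
\[
|\eps_i(\delta^2-D^2)w^l_i(x_j)| \le C\,\delta^2_j \max_{s\in I_j} \sum_{q=1}^n \frac{B^l_q(s)}{\eps_q}.
\]
Finally, use the crude estimates $B^l_q(s)\le 1$ (property (a) of the layer functions) and $\eps_q\ge\eps_1$ for all $q$ to replace each summand by $1/\eps_1$, and absorb the factor $n$ into the generic constant $C$. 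This yields exactly $|\eps_i(\delta^2-D^2)w^l_i(x_j)|\le C\delta_j^2/\eps_1$, as required. The argument for $w^r_i$ is identical after reflecting $x\mapsto 1-x$ and using the analogous bounds for the right boundary layer.

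There is really no serious obstacle here: the lemma is a one-step combination of a standard fourth-derivative truncation estimate with the already-proven smoothness bound on $w_i^{l,(4)}$, and the coarse factor $1/\eps_1$ comes from throwing away all layer and parameter information. The only conditions to check are that \eqref{lte3} may be applied (guaranteed by the hypothesis $x_j\notin J_{\vec b}$) and that the reduction of $\sum_q B^l_q/\eps_q$ to $C/\eps_1$ is uniform in the mesh and in all $\vec\eps$, both of which are immediate. I expect this crude bound to serve as a fall-back estimate, complementary to sharper parameter-uniform bounds that are presumably derived in subsequent lemmas for mesh points inside the layer regions or for transition points $\tau_k\in J_{\vec b}$.
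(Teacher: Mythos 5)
Your proposal is correct and follows essentially the same route as the paper: apply the truncation estimate \eqref{lte3} (valid since $x_j\notin J_{\vec b}$), invoke the bound $|\eps_i w_i^{l,(4)}|\le C\sum_{q=1}^n B^l_q/\eps_q$ from Lemma \ref{lsingular}, and then crudely bound $B^l_q\le 1$ and $\eps_q\ge\eps_1$. No discrepancies to report.
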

\begin{proof} When $x_j \notin J_{\vec{b}}$, from (\ref{lte3}) and Lemma \ref{lsingular}, it follows that
\[
|\eps_i(\delta^2-D^2)w^l _i(x_j)|\leq C\delta^2
_j\;\ds\max_{s\;\in\;I_j}|\eps_{i} w_i^{l,(4)}(s)|\] \[ \leq
C\delta^2 _j\;\ds\max_{s\;\in\;I_j}\ds\sum_{q\;=\;1}^n
\dfrac{B^{l}_{q}(s)}{\eps_q} \leq \dfrac{C\delta^2 _j}{\eps_1}\] as
required.\eop\end{proof}

In what follows  fourth degree polynomials of the form
\[p_{i;\theta}(x)=\sum_{k=0}^4
\frac{(x-x_{\theta})^k}{k!}w_{i}^{l,(k)}(x_{\theta})\] are used, where $\theta$ denotes a pair of integers separated by a comma.

\begin{lemma}\label{general} Let $A(x)$ satisfy (\ref{a1}) and (\ref{a2}) and assume that $M_{\vec{b}}$ is
such that $b_k =1$ for some $k$,\;\;$1 \leq k \leq n-1$. Then, for
each $i,\;j$, $1 \leq i \leq n$, $1 \leq j \leq N$  there exists a
decomposition
\[ w^l_i=\sum_{q=1}^{k+1}w_{i,q}, \] for which the following estimates hold for each $q$ and $r$,  $1 \le q \le k$, $0 \leq r \leq 2,$
\[|\eps_iw_{i,q}^{(r+2)}(x_j)| \leq C \eps^{-\frac{r}{2}}_q B^l_q(x_j)\]
and
\[|\eps_i
w_{i,k+1}^{(3)}(x_j)| \leq
C\sum_{q=k+1}^{n}\frac{B^l_{q}(x_j)}{\sqrt{\eps_q}},\;\; |\eps_i
w_{i,k+1}^{(4)}(x_j)| \leq
C\sum_{q=k+1}^{n}\frac{B^l_{q}(x_j)}{\eps_q}.\]
Furthermore, for $x_j \notin J_{\vec{b}}$,
\begin{equation} \label{ltew1}
|\eps_{i}(\delta^2 -D^2)w^l _{i}(x_j)| \leq C(B^l_{k}(x_{j-1})+\frac{\delta_j^2 }{\eps_{k+1}})\end{equation} and, for $\tau_k \in J_{\vec{b}}$,
\begin{equation} \label{ltew2}
|\eps_{i}(\delta^2 -D^2)w^l _{i}(\tau_k)| \leq C(\; B^l _k (\tau_k
-h_k)+\frac{\delta_k}{\sqrt{\eps_{k+1}}}).\end{equation} Analogous
results hold for the  $w^r_i$ and their derivatives.
\end{lemma}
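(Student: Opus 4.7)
The plan is to construct the decomposition $w^l_i = \sum_{q=1}^{k+1} w_{i,q}$ by iterating a Taylor-polynomial ``peeling'' procedure based on the polynomials $p_{i;\theta}$ introduced just before the statement. Once the decomposition is in hand, each summand has a single, cleanly localised layer scale, so the local truncation error $\eps_i(\delta^2 - D^2)w^l_i(x_j)$ can be attacked by applying the most advantageous of the formulae \eqref{lte1}--\eqref{lte4} to each piece separately and summing.

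Specifically, I would define $w_{i,k+1}, w_{i,k}, \ldots, w_{i,1}$ recursively from the outside in. To isolate the $\eps_q$-scale contribution from the current residual $R_q$, replace $R_q$ on $(\tau_q, 1]$ by the fourth-degree Taylor polynomial $p_{i;q}$ of $R_q$ at $\tau_q$; the replacement is $C^3$ across $\tau_q$ and on $(\tau_q, 1]$ has polynomial derivatives of all orders controlled by the values $R_q^{(r)}(\tau_q)$, $r \le 4$, while on $[0,\tau_q]$ it coincides with $R_q$ and therefore inherits the Lemma \ref{lsingular}-type bounds. The piece $w_{i,q}$ is then the difference between $R_q$ and its replacement. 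Because $b_k = 1$ forces $B^l_k(\tau_k) = N^{-2}$, and because Lemma \ref{layers} implies $B^l_r(\tau_q) \le N^{-2}$ for $r \le q$ whenever the corresponding $b_q = 1$, the sharper-layer contributions collapse at the transition points; the bookkeeping then leaves in each $w_{i,q}$ precisely the $\eps_q$-layer behaviour and yields
\[|\eps_i w_{i,q}^{(r+2)}(x_j)| \le C \eps_q^{-r/2} B^l_q(x_j),\qquad 1 \le q \le k,\ r=0,1,2.\]
The final remainder $w_{i,k+1}$ retains only the scales $\eps_{k+1},\ldots,\eps_n$, and its derivative bounds follow by restricting the sums of Lemma \ref{lsingular} to indices $q \ge k+1$.

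With the decomposition in hand, write $\eps_i(\delta^2-D^2)w^l_i(x_j) = \sum_{q=1}^{k+1}\eps_i(\delta^2-D^2)w_{i,q}(x_j)$ and estimate each term individually. For each $q \le k$ apply the crude formula \eqref{lte1}: since $\eps_q \le \eps_k$ entails $B^l_q \le B^l_k$ pointwise, and since $B^l_q$ is monotone decreasing so its maximum on any mesh interval is attained at the left endpoint, the contribution is at most $CB^l_k(x_{j-1})$ (with the adjacent-mesh-point comparison \eqref{geom3} absorbed into $C$). For the remainder $w_{i,k+1}$ use \eqref{lte3} when $x_j \notin J_{\vec b}$ and \eqref{lte4} at $\tau_k \in J_{\vec b}$. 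Substituting the $w_{i,k+1}^{(3)}$ and $w_{i,k+1}^{(4)}$ bounds, and using $B^l_q \le 1$ together with $\eps_q \ge \eps_{k+1}$ for $q \ge k+1$, yields the terms $\delta_j^2/\eps_{k+1}$ and $\delta_k/\sqrt{\eps_{k+1}}$ respectively (the $\delta_k^2/\eps_{k+1}$ contribution from \eqref{lte4} is dominated by $\delta_k/\sqrt{\eps_{k+1}}$). Summing the two groups of contributions gives \eqref{ltew1} and \eqref{ltew2}.

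The main obstacle is the construction and verification of the decomposition itself, and in particular the precise bookkeeping that forces each $w_{i,q}$ to carry only the $\eps_q$-scale, with no spill-over of the sharper scales $\eps_1,\ldots,\eps_{q-1}$ or of the slower scales $\eps_{q+1},\ldots,\eps_n$. This requires exploiting the smallness $B^l_r(\tau_q) \le N^{-2}$ (available from $b_k = 1$ together with \eqref{geom-1} and \eqref{geom0}) in combination with Lemma \ref{layers} to compare layer scales across the mesh regions, and then tracking how the Taylor-polynomial remainders propagate to higher derivatives. Once the decomposition is established, the remainder of the proof is routine substitution into the truncation-error identities \eqref{lte1}--\eqref{lte4}. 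The analogous statements for the right-layer functions $w^r_i$ follow by the symmetry $x \mapsto 1-x$.
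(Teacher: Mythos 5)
Your handling of the truncation error, once a decomposition with the stated properties is available, is essentially the paper's argument: apply \eqref{lte1} to the pieces $w_{i,q}$, $q\le k$, and a higher-order estimate to $w_{i,k+1}$, then use $B^l_q\le B^l_k$ for $q\le k$, monotonicity, and $B^l_q\le 1$, $\eps_q\ge\eps_{k+1}$ for $q\ge k+1$; your use of \eqref{lte4} instead of \eqref{lte2} at $\tau_k$ is harmless since $\delta_k\le C\sqrt{\eps_{k+1}}$. The genuine gap is exactly where you locate the "main obstacle": your construction of the decomposition does not deliver the claimed piecewise bounds. The paper splits at the layer-interaction points of Lemma \ref{layers}, not at the mesh transition points: $w_{i,k+1}$ equals $w^l_i$ on $[x^{(1)}_{k,k+1},1]$ and the fourth-degree Taylor polynomial $p_{i;k,k+1}$ centred at $x^{(1)}_{k,k+1}$ on $[0,x^{(1)}_{k,k+1})$, and similarly each $w_{i,m}$ ($2\le m\le k$) is the polynomial $p_{i;m-1,m}$ on $[0,x^{(1)}_{m-1,m})$ and the residual beyond, so that $w_{i,m}\equiv 0$ on $[x^{(1)}_{m,m+1},1]$. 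The whole point of these centres is the property \eqref{x3}--\eqref{x4}: at $x^{(1)}_{m-1,m}$ every sharper-scale term satisfies $B^l_q/\eps_q\le B^l_m/\eps_m$ ($q<m$), so the constant fourth derivative of the polynomial part is bounded by $CB^l_m(x^{(1)}_{m-1,m})/\eps_m\le CB^l_m(x)/\eps_m$ on the left interval; and the gains $\sqrt{\eps_m}$, $\eps_m$ in the third- and second-derivative bounds come from integrating this fourth-derivative bound from $x$ up to $x^{(1)}_{m,m+1}$, where the piece vanishes identically. Your proposal supplies neither mechanism.

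Concretely, two things break. First, your bookkeeping rests on $B^l_r(\tau_q)\le N^{-2}$, which requires $b_q=1$; the hypothesis only gives $b_k=1$, and for $q<k$ with $b_q=0$ one has $\tau_q=\tau_{q+1}/2$, tied to $\eps_{q+1}$ rather than $\eps_q$, so no such smallness holds. Even where it does hold, smallness $N^{-2}$ at a single point is not what the lemma asserts: you need the pointwise bound $|\eps_i w_{i,q}^{(r+2)}(x)|\le C\eps_q^{-r/2}B^l_q(x)$ with the exponentially decaying weight $B^l_q(x)$, and that is precisely what the crossing-point property provides and $N^{-2}$-smallness does not. Second, your split is oriented the wrong way: you keep the residual on $[0,\tau_q]$ and put the Taylor polynomial on $(\tau_q,1]$, so one of your pieces coincides with the unpeeled residual near $x=0$, where by Lemma \ref{lsingular} its fourth derivative is of size $\sum_{r}B^l_r/\eps_r\sim\eps_1^{-1}$, violating $|\eps_i w_{i,q}^{(4)}|\le CB^l_q/\eps_q$ for $q>1$; while the pieces you support on $(\tau_q,1]$ are piecewise polynomial out to $x=1$ and cannot satisfy a bound weighted by the decaying $B^l_q(x)$ there. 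The polynomial must sit on the left, where the sharper layers live, and each piece must die at the next crossing point; with the centres $x^{(1)}_{m-1,m}$ (and $x^{(3/2)}$ in the companion Lemma \ref{general1}) the mesh plays no role in the decomposition at all. As it stands, the stated estimates for the $w_{i,q}$ do not follow from your construction, so the second half of your argument has nothing to stand on.
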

\begin{proof} 
Consider the decomposition
\[w^l_i=\sum_{m=1}^{k+1}w_{i,m},\] where the components 
are defined by
\[w_{i,k+1}=\left\{ \begin{array}{ll} p_{i;k,k+1} & {\rm on}\;\;[0,x^{(1)}_{k,k+1})\\
 w^l_i & {\rm otherwise} \end{array}\right. \]
and for each $m$,  $k \ge m \ge 2$,
\[w_{i,m}=\left\{ \begin{array}{ll} p_{i;m-1,m} & \rm{on} \;\; [0,x^{(1)}_{m-1,m})\\
w^l_i-\sum_{q=m+1}^{k+1} w_{i,q} & {\rm otherwise}
\end{array}\right. \]
and
\[w_{i,1}=w^l_i-\sum_{q=2}^{k+1} w_{i,q}\;\; \rm{on} \;\; [0,1]. \]
From the above definitions it follows that, for each $m$, $1 \leq m \leq k$,
$w_{i,m}=0 \;\; \rm{on} \;\; [x^{(1)}_{m,m+1},1]$.\\
To establish the bounds on the fourth derivatives it is seen that:

for $x \in [x^{(1)}_{k,k+1},1]$, Lemma \ref{lsingular} and $x \geq
x^{(1)}_{k,k+1}$ imply that
\[|\eps_i w_{i,k+1}^{(4)}(x)| =|\eps_i w_{i}^{l,(4)}(x)| \leq
C\sum_{q=1}^n \frac{B^l_q(x)}{\eps_q} \leq C\sum_{q=k+1}^n
\frac{B^l_q(x)}{\eps_q};\]

for $x \in [0, x^{(1)}_{k,k+1}]$, Lemma \ref{lsingular} and $x \leq
x^{(1)}_{k,k+1}$ imply that
\[|\eps_i w_{i,k+1}^{(4)}(x)| =|\eps_i w_{i}^{l, (4)}(x^{(1)}_{k,k+1})|
\leq \sum_{q=1}^{n} \frac{B^l_q(x^{(1)}_{k,k+1})}{\eps_q} \leq
C\sum_{q=k+1}^{n} \frac{B^l_q(x^{(1)}_{k,k+1})}{\eps_q} \leq
C\sum_{q=k+1}^{n} \frac{B^l_q(x)}{\eps_q};\]

and for each $m=k, \;\; \dots \;\;,2$, it follows that\\

for $x \in [x^{(1)}_{m,m+1},1]$,\;\; $w_{i,m}^{(4)}=0;$

for $x \in [x^{(1)}_{m-1,m},x^{(1)}_{m,m+1}]$, Lemma \ref{lsingular}
implies that
\[|\eps_i w_{i,m}^{(4)}(x)| \leq |\eps_i w_{i}^{l,(4)}(x)|+\sum_{q=m+1}^{k+1}|\eps_i w_{i,q}^{(4)}(x)|
\leq C\sum_{q=1}^n \frac{B^l_q(x)}{\eps_q} \leq C\frac{B^l_m(x)}{\eps_m};\]

for $x \in [0, x^{(1)}_{m-1,m}]$, Lemma \ref{lsingular} and $x \leq
x^{(1)}_{m-1,m}$ imply that
\[|\eps_i w_{i,m}^{(4)}(x)| =
|\eps_i w_{i}^{l,(4)}(x^{(1)}_{m-1,m})| \leq C\sum_{q=1}^n
\frac{B^l_q(x^{(1)}_{m-1,m})}{\eps_q} \leq
C\frac{B^l_m(x^{(1)}_{m-1,m})}{\eps_m} \leq
C\frac{B^l_m(x)}{\eps_m};
\]

for $x \in [x^{(1)}_{1,2},1],\;\; w_{i,1}^{(4)}=0;$

for $x \in [0, x^{(1)}_{1,2}]$, Lemma \ref{lsingular} implies that
\[|\eps_i w_{i,1}^{(4)}(x)| \leq |\eps_i
w_{i}^{l,(4)}(x)|+\sum_{q=2}^{k+1}|\eps_i w_{i,q}^{(4)}(x)|\leq
C\sum_{q=1}^n \frac{B^l_q(x)}{\eps_q} \leq
C\frac{B^l_1(x)}{\eps_1}.\]

For the bounds on the second and third derivatives note that, for
each $m$, $1 \leq m \leq k $ :

for $x \in [x^{(1)}_{m,m+1},1],\;\;
w_{i,m}^{\prime\prime}=0=w_{i,m}^{(3)};$

for $x \in [0, x^{(1)}_{m,m+1}],\;\;
\ds\int_x^{x^{(1)}_{m,m+1}}\eps_i w_{i,m}^{(4)}(s)ds= \eps_i
w_{i,m}^{(3)}(x^{(1)}_{m,m+1})- \eps_i
w_{i,m}^{(3)}(x)= -\eps_i w_{i,m}^{(3)}(x)$ \\
and so
\[|\eps_i w_{i,m}^{(3)}(x)| \leq \int_x^{x^{(1)}_{m,m+1}}|\eps_i
w_{i,m}^{(4)}(s)|ds \leq \frac{C}{\eps_m}\int_{x}^{x^{(1)}_{m,m+1}}
B^l_m(s)ds \leq C\frac{B^l_m(x)}{\sqrt\eps_m}.\] In a similar way,
it can be shown that
\[|\eps_i w_{i,m}^{\prime\prime}(x)| \leq C B^l_m(x).\]
Using the above decomposition yields
\begin{equation*}|\eps_i(\delta^{2}-D^2)w^l_i(x_j)| \leq
\sum_{q=1}^{k}|\eps_i(\delta^{2}-D^2)w_{i,q}(x_j)|+
|\eps_i(\delta^{2}-D^2)w_{i,k+1}(x_j)|.\end{equation*} For $x_j
\notin J_{\vec{b}}$, applying \eqref{lte3} to the last term and
\eqref{lte1} to all other terms on the right hand side, it follows
that
\begin{equation*}
 |\eps_i(\delta^{2}-D^2)w^l_i(x_j)|\leq C(\sum_{q=1}^{k}\max_{s \in
I_j}|\eps_iw_{i,q}^{\prime\prime}(s)|+\delta^2 _j\max_{s \in
I_j}|\eps_iw_{i,k+1}^{(4)}(s)|).
\end{equation*}
Then \eqref{ltew1} is obtained by using the bounds on the
derivatives obtained in the first part of the lemma.\\ On the other
hand, for $x_j=\tau_k \in J_{\vec{b}}$, applying \eqref{lte2} to the
last term and \eqref{lte1} to the other terms, \eqref{ltew2} is
obtained by a similar argument. The proof for the $w^r_i$ and their
derivatives is similar. \eop
\end{proof}

In what follows  third degree polynomials of the form
\[p^*_{i;\theta}(x)=\sum_{k=0}^3 \frac{(x-y_{\theta})^k}{k!}w_{i}^{l,(k)}(y_{\theta})\] are used, where $\theta$ denotes a pair of integers separated by a comma.

\begin{lemma}\label{general1} Let $A(x)$ satisfy (\ref{a1}) and (\ref{a2}) and assume that $M_{\vec{b}}$ is
such that $b_k =1$ for some $k$,\;\;$1 \leq k \leq n-1$. Then, for
each $i,\;j$, $1 \leq i \leq n$, $1 \leq j \leq N$  there exists a
decomposition
\[ w^l_i=\sum_{m=1}^{k+1}w_{i,m}, \] for which  the following
estimates hold for each $m$,  $1 \le m \le k$,
\[|w_{i,m}^{\prime\prime}(x_j)| \leq C\frac{B^l_m(x_j)}{\eps_m},
 \;\;|w_{i,m}^{(3)}(x_j)| \leq
C\frac{B^l_{m}(x_j)}{\eps_m^{3/2}}\] and
\[|w_{i,k+1}^{(3)}(x_j)| \leq
C\sum_{q=k+1}^{n}\frac{B^l_{q}(x_j)}{\eps_q^{3/2}}.\] Furthermore
\begin{equation} |\eps_i(\delta^2-D^2)w^l_i(x_j)| \leq C\eps_i\ds\left(
\frac{B^l_{k}(x_{j-1})}{\eps_k}+\frac{\delta_j
}{\eps^{3/2}_{k+1}}\right).\end{equation} Analogous results hold for
the $w^r_i$ and their derivatives.
\end{lemma}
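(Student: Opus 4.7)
The plan is to mimic the decomposition argument of Lemma \ref{general}, replacing the degree-four Taylor polynomials $p_{i;\theta}$ by the cubic polynomials $p^*_{i;\theta}$, so that only bounds on second and third derivatives are required for the first $k$ components while $w_{i,k+1}$ absorbs the higher-order behaviour. Concretely, I would set
\[
w_{i,k+1}=\begin{cases} p^*_{i;k,k+1} & \text{on } [0,x^{(1)}_{k,k+1}),\\ w^l_i & \text{otherwise,}\end{cases}
\]
and, for $m=k,k-1,\dots,2$,
\[
w_{i,m}=\begin{cases} p^*_{i;m-1,m} & \text{on } [0,x^{(1)}_{m-1,m}),\\ w^l_i-\sum_{q=m+1}^{k+1}w_{i,q} & \text{otherwise,}\end{cases}
\]
with $w_{i,1}=w^l_i-\sum_{q=2}^{k+1}w_{i,q}$ on $[0,1]$. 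By construction, each $w_{i,m}$ with $m\le k$ vanishes identically on $[x^{(1)}_{m,m+1},1]$, while $w_{i,k+1}=w^l_i$ there. I would take the expansion point $y_{m-1,m}$ equal to a point $x^{(s)}_{m-1,m}$ with $s$ chosen (e.g.\ $s=3/2$) so that at this point the single layer term $B^l_m/\eps_m^{3/2}$ dominates the full sum in the $w^{l,(3)}_i$-bound of Lemma \ref{lsingular}.

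Next I would establish the derivative bounds by splitting into the three regions $[0,x^{(1)}_{m-1,m}]$, $[x^{(1)}_{m-1,m},x^{(1)}_{m,m+1}]$, and $[x^{(1)}_{m,m+1},1]$, as in the proof of Lemma \ref{general}. On the rightmost region $w_{i,m}^{\prime\prime}$ and $w_{i,m}^{(3)}$ vanish. On the middle region the bounds follow from Lemma \ref{lsingular} applied to $w^l_i$ together with the ordering relations of Lemma \ref{layers}, which let me dominate every $B^l_q/\eps_q^{3/2}$ with $q\ne m$ by $B^l_m/\eps_m^{3/2}$. On the leftmost region the cubic $p^*_{i;m-1,m}$ has constant third derivative $w^{l,(3)}_i(y_{m-1,m})$, which is controlled by $CB^l_m(y_{m-1,m})/\eps_m^{3/2}\le CB^l_m(x)/\eps_m^{3/2}$ since $B^l_m$ is monotone decreasing and by the choice of the expansion point. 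Integrating this third-derivative bound from $x$ up to $x^{(1)}_{m,m+1}$, where $w_{i,m}^{\prime\prime}$ vanishes, gives $|w_{i,m}^{\prime\prime}(x)|\le CB^l_m(x)/\eps_m$. The bound on $w_{i,k+1}^{(3)}$ is obtained identically, using only that this component agrees with $w^l_i$ outside an interval on which the dominating transition takes place at $x^{(1)}_{k,k+1}$.

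For the truncation-error estimate I would write
\[
|\eps_i(\delta^2-D^2)w^l_i(x_j)|\le\sum_{m=1}^{k}|\eps_i(\delta^2-D^2)w_{i,m}(x_j)|+|\eps_i(\delta^2-D^2)w_{i,k+1}(x_j)|,
\]
apply \eqref{lte1} to each of the first $k$ terms using the just-proved bound on $w_{i,m}^{\prime\prime}$, and apply \eqref{lte2} to the last term using the bound on $w_{i,k+1}^{(3)}$. The first sum is majorised by $C\eps_i B^l_k(x_{j-1})/\eps_k$, since under the ordering $\eps_1<\cdots<\eps_k$ and Lemma \ref{layers} one has $B^l_m(x_{j-1})/\eps_m\le CB^l_k(x_{j-1})/\eps_k$ for $m\le k$. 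The last term gives $C\eps_i\delta_j\sum_{q=k+1}^n B^l_q/\eps_q^{3/2}\le C\eps_i\delta_j/\eps_{k+1}^{3/2}$. Adding these yields the required inequality.

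The main obstacle is the bookkeeping in the three-region decomposition, specifically verifying that the cubic Taylor polynomial's constant third derivative at the expansion point can be bounded in terms of $B^l_m/\eps_m^{3/2}$ rather than the full sum appearing in Lemma \ref{lsingular}; this is exactly what forces the choice of expansion points along $x^{(s)}_{m-1,m}$ and uses the ordering properties established in Lemma \ref{layers}. Once this is in place, the rest of the argument runs in parallel with the proof of Lemma \ref{general}, and the analogous statements for $w^r_i$ follow by the symmetry already noted for the layer functions $B^r_i$.
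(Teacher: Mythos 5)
Your overall strategy is exactly the paper's: keep the telescoping decomposition of Lemma \ref{general}, replace the quartic Taylor polynomials by the cubics $p^*_{i;\theta}$, bound second and third derivatives of the first $k$ components and the third derivative of $w_{i,k+1}$ by a three-region argument based on Lemma \ref{lsingular} and Lemma \ref{layers}, obtain the second-derivative bound by integrating the third-derivative bound up to the point where $w_{i,m}''$ vanishes, and finish with \eqref{lte1} on the first $k$ pieces and \eqref{lte2} on the last one. The one genuine problem is that you keep the splitting points of the decomposition at the $x^{(1)}_{m-1,m}$ while moving only the Taylor \emph{expansion} points to $x^{(3/2)}_{m-1,m}$. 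These must coincide, and both must be the $x^{(3/2)}$ points --- this is precisely the change the paper makes ("the points $x^{(1)}_{i,j}$ replaced by the points $x^{(3/2)}_{i,j}$"). With your mismatched choice two things break. First, $w_{i,m}$ is no longer continuous (let alone $C^2$) at the cutoff, since a cubic expanded about $x^{(3/2)}_{m-1,m}$ does not match $w^l_i$ and its derivatives at $x^{(1)}_{m-1,m}$, so \eqref{lte1}--\eqref{lte2} cannot be applied on mesh intervals straddling the cutoff. Second, your claimed domination on the middle region fails: on $[x^{(1)}_{m-1,m},x^{(3/2)}_{m-1,m})$ one only knows $x\ge x^{(1)}_{q,m}$ for $q<m$, whereas the inequality $B^l_q(x)/\eps_q^{3/2}\le B^l_m(x)/\eps_m^{3/2}$ requires $x\ge x^{(3/2)}_{q,m}=\tfrac32 x^{(1)}_{q,m}$, which can exceed $x^{(1)}_{m-1,m}$; indeed at $x=x^{(1)}_{m-1,m}$ one has $B^l_{m-1}/\eps_{m-1}^{3/2}=\sqrt{\eps_m/\eps_{m-1}}\,B^l_m/\eps_m^{3/2}$, which is not $O(B^l_m/\eps_m^{3/2})$. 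The same issue affects the region $[x^{(1)}_{k,k+1},1]$ for $w_{i,k+1}$. Once all the cutoffs are taken at the $x^{(3/2)}$ crossing points (which Lemma \ref{layers} covers, since it allows $0<s\le 3/2$), your argument coincides with the paper's.

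One further caution: your final summation step asserts $B^l_m(x_{j-1})/\eps_m\le CB^l_k(x_{j-1})/\eps_k$ for all $m\le k$ unconditionally; this is false near $x=0$ (there $B^l_m\approx 1$ and $1/\eps_m\gg1/\eps_k$). It is valid where the estimate is actually needed, namely at $x_j=\tau_k\in J_{\vec b}$, because \eqref{geom9} gives $x_{j-1}=\tau_k-h_k\ge x^{(s)}_{k-1,k}\ge x^{(s)}_{m,k}$, after which the ordering of Lemma \ref{layers} applies; you should make that restriction explicit rather than invoking only the ordering of the $\eps_i$.
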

\begin{proof} The proof is similar to that of Lemma \ref{general}
with the points $x^{(1)}_{i,j}$ replaced by the points
$x^{(3/2)}_{i,j}$. Consider the decomposition
\[w^l_i=\sum_{m=1}^{k+1}w_{i,m},\] where the components 
are defined by
\[w_{i,k+1}=\left\{ \begin{array}{ll} p^*_{i;k,k+1} & {\rm on}\;\;[0,x^{(3/2)}_{k,k+1})\\
 w^l_i & {\rm otherwise} \end{array}\right. \]
and for each $m$,  $k \ge m \ge 2$,
\[w_{i,m}=\left\{ \begin{array}{ll} p^*_{i;m-1,m} & \rm{on} \;\; [0,x^{(3/2)}_{m-1,m})\\
w^l_i-\ds\sum_{q=m+1}^{k+1} w_{i,q} & {\rm otherwise}
\end{array}\right. \]
and
\[w_{i,1}=w^l_i-\sum_{q=2}^{k+1} w_{i,q}\;\; \rm{on} \;\; [0,1]. \]
From the above definitions it follows that, for each $m$, $1 \leq m
\leq k$,
$w_{i,m}=0 \;\; \rm{on} \;\; [x^{(3/2)}_{m,m+1},1]$.\\
To establish the bounds on the third derivatives it is seen that:

for $x \in [x^{(3/2)}_{k,k+1},1]$, Lemma \ref{lsingular} and $x \geq
x^{(3/2)}_{k,k+1}$ imply that
\[|w_{i,k+1}^{(3)}(x)| = |w_{i}^{l,(3)}(x)| \leq
C\sum_{q=1}^n \frac{B^l_q(x)}{\eps_q^{3/2}} \leq C\sum_{q=k+1}^n
\frac{B^l_q(x)}{\eps_q^{3/2}};\]

for $x \in [0, x^{(3/2)}_{k,k+1}]$, Lemma \ref{lsingular} and $x
\leq x^{(3/2)}_{k,k+1}$ imply that
\[|w_{i,k+1}^{(3)}(x)| = |w_{i}^{l, (3)}(x^{(3/2)}_{k,k+1})| \leq \sum_{q=1}^{n}
\frac{B^l_q(x^{(3/2)}_{k,k+1})}{\eps_q^{3/2}} \leq \sum_{q=k+1}^{n}
\frac{B^l_q(x^{(3/2)}_{k,k+1})}{\eps_q^{3/2}} \leq \sum_{q=k+1}^{n}
\frac{B^l_q(x)}{\eps_q^{3/2}};\]

and for each $m=k, \;\; \dots \;\;,2$, it follows that\\

for $x \in [x^{(3/2)}_{m,m+1},1]$, $w_{i,m}^{(3)}=0;$

for $x \in [x^{(3/2)}_{m-1,m},x^{(3/2)}_{m,m+1}]$, Lemma
\ref{lsingular} implies that
\[|w_{i,m}^{(3)}(x)| \leq |w_{i}^{l,(3)}(x)|+\sum_{q=m+1}^{k+1}|w_{i,q}^{(3)}(x)|
\leq C\sum_{q=1}^n \frac{B^l_q(x)}{\eps_q^{3/2}} \leq
C\frac{B^l_m(x)}{\eps_m^{3/2}};\]

for $x \in [0, x^{(3/2)}_{m-1,m}]$, Lemma \ref{lsingular} and $x
\leq x^{(3/2)}_{m-1,m}$ imply that
\[|w_{i,m}^{(3)}(x)| =|w_{i}^{l,(3)}(x^{(3/2)}_{m-1,m})| \leq C\sum_{q=1}^n
\frac{B^l_q(x^{(3/2)}_{m-1,m})}{\eps_q^{3/2}} \leq
C\frac{B^l_m(x^{(3/2)}_{m-1,m})}{\eps_m^{3/2}} \leq
C\frac{B^l_m(x)}{\eps_m^{3/2}};
\]

for $x \in [x^{(3/2)}_{1,2},1],\;\; w_{i,1}^{(3)}=0;$

for $x \in [0, x^{(3/2)}_{1,2}]$, Lemma \ref{lsingular} implies that
\[|w_{i,1}^{(3)}(x)| \leq |w_{i}^{l,(3)}(x)|+\sum_{q=2}^{k+1}|w_{i,q}^{(3)}(x)|
\leq C\sum_{q=1}^n \frac{B^l_q(x)}{\eps_q^{3/2}} \leq
C\frac{B^l_1(x)}{\eps_1^{3/2}}.\]

For the bounds on the second derivatives note that, for each $m$, $1
\leq m \leq k $ :

for $x \in [x^{(3/2)}_{m,m+1},1],\;\; w_{i,m}^{\prime\prime}=0;$

for $x \in [0, x^{(3/2)}_{m,m+1}],\;\; \int_x^{x^{(3/2)}_{m,m+1}}
w_{i,m}^{(3)}(s)ds = w_{i,m}^{\prime\prime}(x^{(3/2)}_{m,m+1})-
w_{i,m}^{\prime\prime}(x)= -w_{i,m}^{\prime\prime}(x)$ \\
and so
\[|w_{i,m}^{\prime\prime}(x)|
\leq \int_x^{x^{(3/2)}_{m,m+1}}|w_{i,m}^{(3)}(s)|ds \leq
\frac{C}{\eps_m^{3/2}}\int_{x}^{x^{(3/2)}_{m,m+1}} B^l_m(s)ds \leq
C\frac{B^l_m(x)}{\eps_m}.\] Finally, since
\[|\eps_i(\delta^{2}-D^2)w^l_i(x_j)| \leq \sum_{m=1}^{k}|\eps_i(\delta^{2}-D^2)w_{i,m}(x_j)|+ |\eps_i(\delta^{2}-D^2)w_{i,k+1}(x_j)|,\] using  (\ref{lte2}) on the
last term and (\ref{lte1}) on all other terms on the right hand
side, it follows that
\[|\eps_i(\delta^{2}-D^2)w^l_i(x_j)| \leq C(\sum_{m=1}^{k}\max_{s \in
I_j}|\eps_iw_{i,m}^{\prime\prime}(s)| +\delta_j\max_{s \in
I_j}|\eps_iw_{i,k+1}^{(3)}(s)|).\]  The desired result follows by
applying the bounds on the derivatives obtained in the first part of
the lemma. The proof for the $w^r_i$ and their derivatives is
similar.\eop \end{proof}

\begin{lemma}\label{est3} Let $A(x)$ satisfy (\ref{a1}) and (\ref{a2}).
Then, on each mesh $M_{\vec{b}}$, the following estimate holds for
$i=1,\; \dots ,\; n$ and each $j=1, \;\dots,\; N$,  \[
|\eps_i(\delta^2-D^2)w^l_i(x_j)| \leq CB^l_n(x_{j-1}).\] An
analogous result holds for the $w^r_i$.
\end{lemma}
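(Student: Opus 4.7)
The plan is to show that both pieces of the local truncation error $(\delta^2-D^2)w^l_i(x_j)$, after multiplication by $\eps_i$, can each be dominated by $CB^l_n(x_{j-1})$, using the fact that the singular component satisfies $\vec L\vec w^l=\vec 0$ together with the bound $\|\vec w^l(x)\|\le CB^l_n(x)$ from Lemma \ref{lsingular}. This estimate, in contrast to the sharper ones in Lemmas \ref{general} and \ref{general1}, makes no attempt to exploit any cancellation coming from the structure of the mesh; it is a purely ``crude'' bound, valid on every $M_{\vec b}$ and in particular at transition points. I would carry it out as follows.

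First I would split by the triangle inequality
\[
|\eps_i(\delta^2-D^2)w^l_i(x_j)|\;\le\;|\eps_i\,\delta^2 w^l_i(x_j)|+|\eps_i\,{w^l_i}''(x_j)|.
\]
The continuous term is easy: from $\vec L\vec w^l=\vec 0$ we have the pointwise identity $\eps_i\,{w^l_i}''(x)=\sum_{k=1}^n a_{ik}(x)w^l_k(x)$, so by the boundedness of $A$ and the bound $\|\vec w^l(x)\|\le CB^l_n(x)$ from Lemma \ref{lsingular},
\[
|\eps_i\,{w^l_i}''(x)|\;\le\;C\,B^l_n(x).
\]

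For the discrete term I would use a Taylor-with-integral-remainder expansion about $x_j$, which gives
\[
D^+w^l_i(x_j)-D^-w^l_i(x_j)=\frac{1}{h_{j+1}}\int_{x_j}^{x_{j+1}}(x_{j+1}-s)\,{w^l_i}''(s)\,ds
+\frac{1}{h_j}\int_{x_{j-1}}^{x_j}(s-x_{j-1})\,{w^l_i}''(s)\,ds.
\]
Because the two weight functions lie in $[0,1]$, dividing by $\bar h_j$ and multiplying by $\eps_i$ yields
\[
|\eps_i\,\delta^2 w^l_i(x_j)|\;\le\;\frac{1}{\bar h_j}\int_{x_{j-1}}^{x_{j+1}}|\eps_i\,{w^l_i}''(s)|\,ds
\;\le\;\frac{C}{\bar h_j}\int_{x_{j-1}}^{x_{j+1}}B^l_n(s)\,ds,
\]
using the pointwise bound on $\eps_i\,{w^l_i}''$ derived above.

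Finally, since $B^l_n$ is monotone decreasing on $[0,1]$, the integrand is at most $B^l_n(x_{j-1})$, and $x_{j+1}-x_{j-1}=h_j+h_{j+1}=2\bar h_j$, so the two factors cancel and I obtain $|\eps_i\,\delta^2 w^l_i(x_j)|\le 2CB^l_n(x_{j-1})$. Combining with the continuous-term bound and using $B^l_n(x_j)\le B^l_n(x_{j-1})$ (again by monotonicity) gives the claimed estimate. The analogous bound for the right-layer component $w^r_i$ follows by the same argument, with monotonicity of $B^r_n$ giving $B^r_n(x_{j+1})$ instead. There is no serious obstacle here; the only point that needs care is to invoke the ODE (rather than Lemma \ref{lsingular} directly on $w^{l,\prime\prime}_i$) so that the bound on $\eps_i\,{w^l_i}''$ involves only $B^l_n$ and not the sum $\sum_{q\ge i}B^l_q/\eps_q$.
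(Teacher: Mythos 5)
Your proof is correct, and it is in essence the paper's own argument: bound the truncation error crudely in terms of $\eps_i w_i^{l,\prime\prime}$ on $I_j$ and then use monotonicity of the layer function to land on $CB^l_n(x_{j-1})$. The differences are cosmetic. Where you rederive the comparison between $\delta^2$ and $D^2$ via Taylor expansion with integral remainder plus a triangle inequality, the paper simply invokes its ready-made estimate \eqref{lte1}, $|(\delta^2-D^2)\psi(x_j)|\le C\max_{s\in I_j}|\psi^{\prime\prime}(s)|$, which is exactly what your computation reproves. And where you get the pointwise bound $|\eps_i w_i^{l,\prime\prime}(x)|\le CB^l_n(x)$ from the equation $\vec L\vec w^l=\vec 0$ and the zeroth-order bound on $\vec w^l$, the paper applies the second-derivative bound of Lemma \ref{lsingular} directly: $\eps_i\sum_{q=i}^n B^l_q(x_{j-1})/\eps_q\le\sum_{q=i}^n B^l_q(x_{j-1})\le n\,B^l_n(x_{j-1})$, since $\eps_i\le\eps_q$ for $q\ge i$ and $B^l_q\le B^l_n$. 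So your closing caution that one must use the ODE rather than Lemma \ref{lsingular} on $w_i^{l,\prime\prime}$ is not actually needed --- the direct route gives a bound involving only $B^l_n$ as well --- but your variant is equally valid and changes nothing essential.
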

\begin{proof}
From $\;\eqref{lte1}\;$ and Lemma \ref{lsingular}, for each $\;i=1,\dots,n\;$ and $\;j=1,\dots,N,\;$ it follows that
\[|\eps_i(\delta^2-D^2)w^l_i(x_j)|\;\leq\;
C\;\ds\max_{s\in I_j}|\eps_i w_i^{l,\prime\prime}(s)|\;\]\[\le\;
C\;\eps_i\ds\sum_{q=i}^{n}\dfrac{B^l_q(x_{j-1})}{\eps_q}\;\le\;CB^l_n(x_{j-1}).\]
The proof for the $w^r_i$ and their derivatives is similar.\eop
\end{proof}

The following theorem provides the error estimate for the singular
component.
\begin{theorem} \label{singularerrorthm}Let $A(x)$ satisfy (\ref{a1}) and
(\ref{a2}). Let $\vec w$ denote the singular component of the exact
solution from (\ref{BVP}) and $\vec W$ the singular component of the
 discrete solution from (\ref{discreteBVP}).  Then
\begin{equation}\;\; ||\vec{W-w}|| \leq C\,N^{-2}(\ln N)^{3}. \end{equation}
\end{theorem}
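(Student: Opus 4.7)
The plan is to parallel the structure of the proof of Theorem \ref{smootherrorthm}. First decompose $\vec{w}=\vec{w}^l+\vec{w}^r$ and, correspondingly, $\vec{W}=\vec{W}^l+\vec{W}^r$; by the triangle inequality it suffices to bound $\|\vec{W}^l-\vec{w}^l\|$, the right-layer estimate being entirely analogous. Apply the comparison Lemma \ref{comparison} to $\vec{Z}=\vec{W}^l-\vec{w}^l$ and the barrier function $\vec{\Phi}$ defined in \eqref{barrier}. Since $\vec{Z}(0)=\vec{Z}(1)=\vec 0$ and $\vec{L}^N\vec{Z}(x_j)=-E(\delta^2-D^2)\vec{w}^l(x_j)$, and $\vec{\Phi}$ satisfies \eqref{barrierbound1}, the conclusion $\|\vec{Z}(x_j)\|\le CN^{-2}(\ln N)^3$ reduces to verifying that the ratio
\[
R(w_i^l(x_j)):=\frac{|\eps_i(\delta^2-D^2)w_i^l(x_j)|}{(\vec{L}^N\vec{\Phi}(x_j))_i}
\]
is bounded by a constant for every $1\le i\le n$ and every interior mesh point $x_j$.

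For $x_j\notin J_{\vec{b}}$ the denominator is bounded below by $CN^{-2}(\ln N)^3$ via \eqref{barrierbound3}. In the interior region away from both layers, $\vec{w}^l$ is already of order $N^{-2}$ by Lemma \ref{lsingular} and the choice of $\tau_n$, and the bound of Lemma \ref{est3} together with the property $B_n^l(x_{j-1})\le N^{-2}$ there immediately gives $R\le C$. Inside the fine mesh near $0$ where $\delta_j=O(\sqrt{\eps_1}\,N^{-1}\ln N)$, Lemma \ref{est1} yields $|\eps_i(\delta^2-D^2)w_i^l(x_j)|\le C\delta_j^2/\eps_1=O(N^{-2}(\ln N)^2)$. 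In the intermediate mesh sub-intervals bounded above by some $\tau_{k+1}$ with $b_{k+1}=1$, Lemma \ref{general} gives the composite bound $C(B_k^l(x_{j-1})+\delta_j^2/\eps_{k+1})$; using \eqref{geom7} to estimate $\delta_j$ by $C\sqrt{\eps_{k+1}}\,N^{-1}\ln N$ and the layer-function ordering in Lemma \ref{layers}, this again reduces to $O(N^{-2}(\ln N)^2)$.

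The delicate case is $x_j=\tau_k\in J_{\vec{b}}$, which forces $b_k=1$ or $b_{k-1}=1$. The denominator is bounded below by \eqref{LHgeh} or \eqref{LHleh} according as $H_k\ge h_k$ or $H_k\le h_k$. For the numerator the sharper decomposition of Lemma \ref{general1} is the right tool, giving $C\eps_i\bigl(B_k^l(x_{j-1})/\eps_k+\delta_k/\eps_{k+1}^{3/2}\bigr)$; the $B_k^l(\tau_k-h_k)$ factor is absorbed by \eqref{geom3} and by $B_k^l(\tau_k)\le N^{-2}$ from \eqref{geom0}, while $\delta_k$ is estimated via \eqref{geom7} and \eqref{geom2}. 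One then runs through the six subcases as in the smooth proof (the splits being $i\le k$ versus $i>k$, combined with $H_k\gtreqless h_k$, plus the single case $b_k=0,\,b_{k-1}=1$), and in each one the ordering $\eps_1<\cdots<\eps_n$ together with the inequality \eqref{geom10} of Lemma \ref{s1} makes the $\eps$-powers cancel and produces $R\le C$.

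The principal obstacle is this last case analysis at the transition points: one must choose between the two decompositions (Lemma \ref{general} yielding a $\delta_j^2$ factor, Lemma \ref{general1} a $\delta_j$ factor) so that the numerator's powers of $\eps_i,\eps_k,\eps_{k+1}$ exactly match the lower bound on the barrier's $\vec{L}^N\vec{\Phi}$, with the logarithmic factors $(\ln N)^3$ supplied by $\vec{\Phi}$ and $\delta_k\sim\sqrt{\eps_k}\,N^{-1}\ln N$ absorbing no more than $(\ln N)^2$. Once the bookkeeping of these subcases is carried out, the theorem follows by invoking Lemma \ref{comparison} to obtain $\|\vec{W}^l-\vec{w}^l\|\le \|\vec{\Phi}\|\le CN^{-2}(\ln N)^3$, and the analogous right-layer bound completes the proof. \eop
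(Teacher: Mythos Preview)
Your overall architecture matches the paper's proof exactly: split into $\vec{w}^l,\vec{w}^r$, apply Lemma~\ref{comparison} with the barrier $\vec{\Phi}$, bound the ratio $R$ separately for $x_j\notin J_{\vec b}$ (using Lemmas~\ref{est1}, \ref{general}, \ref{est3} on the successive sub-intervals) and for $x_j=\tau_k\in J_{\vec b}$ (six subcases). That part is fine.

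There is, however, a genuine gap in your treatment of the transition points. You write that ``the sharper decomposition of Lemma~\ref{general1} is the right tool'' and only afterwards hedge that one must choose between the two lemmas. In fact Lemma~\ref{general1} alone \emph{fails} when $i>k$: its bound carries an explicit factor $\eps_i$, so the term $C\eps_i B_k^l(\tau_k-h_k)/\eps_k$ contributes $\eps_i/\eps_k$ to the ratio against the $N^{-2}$ part of \eqref{LHgeh}, and this is unbounded for $i>k$. The paper's resolution is to split the six subcases by the relation of $i$ to $k$: Lemma~\ref{general1} is used for $i\le k$ (resp.\ $i\le k-1$ when $b_k=0$), where $\eps_i/\eps_k\le 1$, and Lemma~\ref{general} is used for $i>k$ (resp.\ $i>k-1$), where the absence of the $\eps_i$ prefactor is essential.

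A second, related point: you describe Lemma~\ref{general} as ``yielding a $\delta_j^2$ factor''. That is the content of \eqref{ltew1}, which applies only for $x_j\notin J_{\vec b}$. At a transition point $\tau_k$ the relevant estimate from Lemma~\ref{general} is \eqref{ltew2}, namely $C\bigl(B_k^l(\tau_k-h_k)+\delta_k/\sqrt{\eps_{k+1}}\bigr)$, which is linear in $\delta_k$. It is precisely this form, combined with the $\eps_i/\sqrt{\eps_k\eps_{k+1}}$ in \eqref{LHgeh}, that makes the $i>k$ subcases close: for instance with $H_k\ge h_k$ one gets $R\le C(1+\sqrt{\eps_k\eps_{k+1}}/\eps_i)\le C$. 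Once you record the correct criterion (split on $i$ versus $k$, not on $H_k$ versus $h_k$) and use \eqref{ltew2} rather than \eqref{ltew1} at the transition points, the six subcases go through exactly as the paper tabulates them.
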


\begin{proof}
Since $\vec{w}=\vec{w}^l+\vec{w}^r$, it suffices to prove the result
for $\vec{w}^l$ and $\vec{w}^r$ separately. Here it is proved for
$\vec{w}^l$ by an application of Lemma \ref{comparison}. A similar proof holds for $\vec{w}^r$.\\
The proof is in two parts.\\ First assume that $x_j \notin
J_{\vec{b}}$.
Each open subinterval $(\tau_k,\tau_{k+1})$ is treated separately.\\
First, consider $x_j \in (0,\tau_1)$. Then, on each mesh
$M_{\vec{b}}$,  $\delta_j \leq CN^{-1}\tau_1$ and the result follows
 from (\ref{geom2}) and Lemma \ref{est1}.\\
Secondly, consider $x_j \in (\tau_1,\tau_2)$, then $\tau_1 \leq
x_{j-1}$ and $\delta_j \leq CN^{-1}\tau_2$. The $2^{n+1}$ possible
meshes are divided into subclasses of two types. On the meshes
$M_{\vec b}$ with $b_1=0$ the result follows from (\ref{geom2}),
(\ref{geom-1}) and Lemma \ref{est1}. On the meshes $M_{\vec b}$ with
$b_1=1$ the result follows from (\ref{geom2}), (\ref{geom0}) and Lemma \ref{general}.\\
Thirdly, in the general case $x_j \in (\tau_m,\tau_{m+1})$ for $2
\leq m \leq n-1$, it follows that $\tau_m \leq x_{j-1}$ and
$\delta_j \leq CN^{-1}\tau_{m+1}$. Then $M_{\vec b}$ is divided into
subclasses of three types: $M_{\vec b}^0=\{M_{\vec b}: b_1= \dots
=b_m =0\},\; M_{\vec b}^{r}=\{M_{\vec b}:  b_r=1, \; b_{r+1}= \dots
=b_m =0 \; \mathrm{for \; some}\; 1 \leq r \leq m-1\}$ and $M_{\vec
b}^m=\{M_{\vec b}: b_m=1\}.$ On $M_{\vec b}^0$ the result follows
from (\ref{geom2}), (\ref{geom-1}) and Lemma \ref{est1}; on $M_{\vec
b}^r$ from (\ref{geom2}), (\ref{geom-1}), (\ref{geom0}) and Lemma
\ref{general}; on $M_{\vec b}^m$ from (\ref{geom2}), (\ref{geom0})
and Lemma \ref{general}. \\
Finally, for $x_j \in (\tau_n,1)$, $\tau_n \leq x_{j-1}$ and
$\delta_j \leq CN^{-1}$. Then $M_{\vec b}$ is divided into
subclasses of three types: $M_{\vec b}^0=\{M_{\vec b}: b_1= \dots
=b_n =0\},\; M_{\vec b}^{r}=\{M_{\vec b}:  b_r=1, \; b_{r+1}= \dots
=b_n =0 \; \mathrm{for \; some}\; 1 \leq r \leq n-1\}$ and $M_{\vec
b}^n=\{M_{\vec b}: b_n=1\}.$ On $M_{\vec b}^0$ the result follows
from (\ref{geom2}), (\ref{geom-1}) and Lemma \ref{est1}; on $M_{\vec
b}^r$ from (\ref{geom2}), (\ref{geom-1}), (\ref{geom0}) and Lemma
\ref{general}; on $M_{\vec b}^n$ from (\ref{geom0}) and Lemma
\ref{est3}.\\\\
Now assume that $x_j = \tau_k \in J_{\vec{b}}$. Analogously to the
proof of Theorem \ref{smootherrorthm} the ratio $R(w_i(\tau_k))$ is
introduced in order to facilitate the use of Lemma \ref{comparison}.
To complete the proof it suffices to establish in all cases that
\begin{equation} \label{ratio2} R_i(w(\tau_k)) \le C.
\end{equation}
The required estimates of the denominator of $R(w_i(\tau_k))$ are
\eqref{LHgeh} and \eqref{LHleh}. The numerator is bounded above
using Lemmas \ref{general} and \ref{general1}. The cases $b_k =1$
and $b_k=0$ are treated separately and the inequalities
\eqref{geom2}, \eqref{geom7}, \eqref{geom1},
\eqref{geom0} and \eqref{geom3} are used systematically.\\
Suppose first that $b_k=1$, then there are four possible subcases:\\
\begin{equation}
\begin{array}{llll}
\mathrm{Lemma \;\ref{general1}}, \;\;& i \leq k,\;& H_k \ge h_k, \;&
R(w_i(\tau_k)) \le
C(\frac{\eps_i}{\eps_k}+\frac{\sqrt{\eps_k \eps_{k+1}}}{\eps_{k+1}}).\\
                  &          & H_k \leq h_k, \; & R(w_i(\tau_k)) \le
                  C(\frac{\eps_i}{\eps_k}+(\frac{\eps_k
                  }{\eps_{k+1}})^{3/2}).\\
\mathrm{Lemma \;\ref{general}}, \;\;& i > k,\;& H_k \ge h_k, \;&
R(w_i(\tau_k)) \le
C(1+\frac{\sqrt{\eps_k \eps_{k+1}}}{\eps_i}).\\
                  &          & H_k \leq h_k, \; & R(w_i(\tau_k)) \le
                  C(1+\frac{\eps^{3/2}_k
                  }{\eps_i \sqrt{\eps_{k+1}}}).\\

\end{array}
\end{equation}
Secondly, if $b_k=0$, then $b_{k-1}=1$, because otherwise $\tau_k
\notin J_b$, and furthermore $H_k \le h_k$. There are two possible
subcases:
\begin{equation}
\begin{array}{llll}
\mathrm{Lemma \;\ref{general1}}, \;\;& i \leq k-1,\;& H_k \leq h_k,
\;& R(w_i(\tau_k)) \le
C(\frac{\eps_i}{\eps_{k-1}}+1).\\
\mathrm{Lemma \;\ref{general}}, \;\;& i > k-1,\;& H_k \leq h_k, \;&
R(w_i(\tau_k)) \le
C(1+\frac{\eps_k}{\eps_i}).\\
\end{array}
\end{equation}
In all six subcases, because of the ordering of the $\eps_i$, it is
clear that condition \eqref{ratio2} is fulfilled. This concludes the
proof. \eop \end{proof}

The following theorem gives the required essentially second order
parameter-uniform error estimate.
\begin{theorem}Let $A(x)$ satisfy (\ref{a1}) and
(\ref{a2}). Let $\vec u$ denote the exact solution from (\ref{BVP})
and $\vec U$ the discrete solution from (\ref{discreteBVP}).  Then
\begin{equation}\;\; ||\vec{U-u}|| \leq C\,N^{-2}(\ln N)^3. \end{equation}
\end{theorem}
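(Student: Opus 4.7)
The plan is to combine the two preceding theorems (Theorem \ref{smootherrorthm} and Theorem \ref{singularerrorthm}) using the linearity of the Shishkin-type decomposition and a straightforward triangle inequality. Since all the heavy analytic work has already been carried out in the separate treatment of the smooth and singular error components, only an assembly step remains.

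First, I would observe that both the continuous and discrete solutions admit compatible decompositions: $\vec{u} = \vec{v} + \vec{w}$ and $\vec{U} = \vec{V} + \vec{W}$, where $\vec{V}$ and $\vec{W}$ are defined as the discrete analogues of $\vec{v}$ and $\vec{w}$ respectively, with matching boundary values. Consequently the total error splits as
\[
\vec{U} - \vec{u} = (\vec{V} - \vec{v}) + (\vec{W} - \vec{w}),
\]
so the triangle inequality gives
\[
\|\vec{U} - \vec{u}\| \leq \|\vec{V} - \vec{v}\| + \|\vec{W} - \vec{w}\|.
\]

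Next, I would invoke Theorem \ref{smootherrorthm}, which furnishes $\|\vec{V} - \vec{v}\| \leq C N^{-2}(\ln N)^3$, and Theorem \ref{singularerrorthm}, which furnishes $\|\vec{W} - \vec{w}\| \leq C N^{-2}(\ln N)^3$. Adding these two bounds and absorbing the factor of $2$ into the generic constant $C$ yields the desired estimate
\[
\|\vec{U} - \vec{u}\| \leq C N^{-2}(\ln N)^3.
\]

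There is no substantive obstacle at this stage, since the analytic content has already been extracted: the construction of the barrier function $\vec{\Phi}$ in \eqref{barrier}, the sharp layer decomposition of $\vec{w}^l$ (Lemmas \ref{general} and \ref{general1}), and the case analysis according to whether $x_j \in J_{\vec{b}}$ or not and whether $b_k = 0$ or $b_k = 1$, all feed into the two component theorems. The only minor subtlety worth mentioning is that the discrete decomposition must be defined so that $\vec{V}(0) = \vec{v}(0)$, $\vec{V}(1) = \vec{v}(1)$ and analogously for $\vec{W}$, so that $\vec{L}^N(\vec{V} - \vec{v})$ and $\vec{L}^N(\vec{W} - \vec{w})$ reduce purely to local truncation error terms and the component theorems apply verbatim. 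Beyond this bookkeeping, the theorem is an immediate corollary of the two preceding ones.
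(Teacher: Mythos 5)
Your proposal is correct and coincides with the paper's own proof, which likewise writes $\vec{U}-\vec{u}=(\vec{V}-\vec{v})+(\vec{W}-\vec{w})$ and applies the triangle inequality together with Theorems \ref{smootherrorthm} and \ref{singularerrorthm}. The remark about the boundary values of $\vec{V}$ and $\vec{W}$ is sound bookkeeping and consistent with how the paper defines the discrete analogues.
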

\begin{proof}
An application of the triangle inequality and the results of
Theorems \ref{smootherrorthm} and \ref{singularerrorthm} leads
immediately to the required result.\eop
 \end{proof}

\end{document}